\documentclass[a4paper,12pt]{article}
\usepackage{amsmath}
\usepackage{amssymb}
\usepackage{amscd}
\usepackage{amsthm}
\usepackage{graphicx}
\usepackage{setspace}

\newtheorem{theorem}{Theorem}
\newtheorem{lemma}[theorem]{Lemma}
\newtheorem*{rtheorem}{Main Theorem}
\newtheorem{corollary}[theorem]{Corollary}

\newtheorem{remark}[theorem]{Remark}
\usepackage{pdfsync}
\def \PG{\mathrm{PG}}

\def\V{\mathrm{V}}

\def\C{\mathcal{C}}

\def\D{\mathcal{D}}

\def\F{\mathbb{F}}
\def\G{\mathcal{G}}
\def\I{\mathcal{I}}

\def\P{\mathcal{P}}
\def\M{\mathcal{M}}
\def\S{\mathcal{S}}
\def\O{\mathcal{O}}
\def\L{\mathcal{L}}
\def\K{\mathcal{K}}

\def\PGammaL{\mathrm{P}\Gamma\mathrm{L}}

\title{Extending pseudo-arcs in odd characteristic}
\author{Tim Penttila \and Geertrui Van de Voorde}
\begin{document}
\maketitle
Address and affiliation of the authors:

\noindent
Tim Penttila\\
Department of Mathematics\\
Colorado State University\\
Fort Collins, CO 80523-1874\\
penttila@math.colostate.edu\\

\noindent
Geertrui Van de Voorde\\
Department of Mathematics\\
Vrije Universiteit Brussel\\
Pleinlaan 2\\
1050 Brussel\\
gvdevoor@vub.ac.be\\ 0032/26293476

\begin{abstract} A {\em pseudo-arc} in $\PG(3n-1,q)$ is a set of $(n-1)$-spaces such that any three of them span the whole space. A pseudo-arc of size $q^n+1$ is a {\em pseudo-oval}. If a pseudo-oval $\O$ is obtained by applying field reduction to a conic in $\PG(2,q^n)$, then $\O$ is called a {\em pseudo-conic}. 

We first explain the connection of (pseudo-)arcs with Laguerre planes, orthogonal arrays and generalised quadrangles. In particular, we prove that the Ahrens-Szekeres GQ is obtained from a $q$-arc in $\PG(2,q)$ and we extend this construction to that of a GQ of order $(q^n-1,q^n+1)$ from a pseudo-arc of $\PG(3n-1,q)$ of size $q^n$.

The main theorem of this paper shows that if $\K$ is a pseudo-arc in $\PG(3n-1,q)$, $q$ odd, of size larger than the size of the second largest complete arc in $\PG(2,q^n)$,  where for one element $K_i$ of $\K$, the partial spread $\S=\{K_1,\ldots,K_{i-1},K_{i+1},\ldots,K_{s}\}/K_i$ extends to a Desarguesian spread of $\PG(2n-1,q)$, then $\K$ is contained in a pseudo-conic. The main result of \cite{Casse} also follows from this theorem.

\end{abstract}

\section{Introduction}
Throughout this paper, we denote the finite field of order $q=p^h$, where $p$ is prime, by $\F_q$ and the $n$-dimensional projective space over $\F_q$ by $\PG(n,q)$. A {\em pseudo-arc} is a set of $(n-1)$-spaces in $\PG(3n-1,q)$, such that any three span the whole space. (In \cite{Casse}, a pseudo-arc of size $k$ is called a {\em $k$-set} in $\PG(3n-1,q)$.) If $n=1$, a pseudo-arc of size $k$ is called a {\em k-arc}. It is shown in \cite{Thas2} that a pseudo-arc in $\PG(3n-1,q)$ has at most $q^n+1$ elements if $q$ is odd, and at most $q^n+2$ elements if $q$ is even. A pseudo-arc of size $q^n+1$ is a {\em pseudo-oval}, a pseudo-arc of size $q^n+2$ is called a {\em pseudo-hyperoval}. If $n=1$, pseudo-ovals and pseudo-hyperovals are simply called {\em ovals} and {\em hyperovals}. In \cite{Thas2}, Thas showed that if $q$ is even, a pseudo-oval of $\PG(3n-1,q)$ can be completed (in a unique way) to a pseudo-hyperoval. 

An example of a pseudo-oval can be obtained by {\em field reduction}: to every point of $\PG(2,q^n)$, there corresponds in a natural way an $(n-1)$-space of $\PG(3n-1,q)$. When we apply field reduction to an oval of $\PG(2,q^n)$, we obtain a pseudo-oval, and a pseudo-oval that is obtained in this way from a conic in $\PG(2,q^n)$ is called a {\em pseudo-conic}. 

In this paper, we prove a condition under which a pseudo-arc in $\PG(3n-1,q)$, $q$ odd, can be completed to a pseudo-conic. More precisely, we will show the following.
\begin{rtheorem}
If $\K=\{K_1,\ldots,K_s\}$ is a pseudo-arc in $\PG(3n-1,q)$, $q$ odd, of size at least the size of the second largest complete arc in $\PG(2,q^n)$,  where for one element $K_i$ of $\K$, the partial spread $\S=\{K_1,\ldots,K_{i-1},K_{i+1},\ldots,K_{s}\}/K_i$ extends to a Desarguesian spread of $\PG(2n-1,q)=\PG(3n-1,q)/K_i$, then $\K$ is contained in a pseudo-conic.
\end{rtheorem}

In the planar case, the answer to this question depends on the plane being Desarguesian or not and on the size of the arc (see Section 2). Thus the bound cannot be improved without improving the upper bound
on the size of the second largest complete arc in $\PG(2,q^n)$, and the hypothesis on the partial spread
is necessitated by the existence of large complete arcs in non-Desarguesian planes.

We will explain the link between arcs and generalised quadrangles in Section 3. This gives us an easier description of the Ahrens-Szekeres GQ. We then show that also large pseudo-arcs give generalised quadrangles and discuss the associated Laguerre planes to pseudo-ovals in Section 4. In Section 5, we explain the link between Laguerre planes and orthogonal arrays, and in Section 6, we list some computer results on pseudo-arcs in spaces of small order. Finally, in Section 7, we prove the main theorem.

\section{Large arcs in projective planes}

An arc $A$ is called {\em complete} if there is no larger arc containing $A$. When studying complete arcs in projective planes, the situation turns out to be different for Desarguesian and non-Desarguesian planes, and for planes of even and odd order.

In the case that the plane is Desarguesian, and its order is odd, we have the following well-known theorem of Segre.

\begin{theorem} {\rm \cite{Segre}} In $\PG(2,q)$, $q$ odd, every oval is a conic.
\end{theorem}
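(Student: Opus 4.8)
I would follow Segre's classical argument, whose engine is the \emph{Lemma of Tangents}. Let $\mathcal{O}$ be an oval in $\PG(2,q)$ with $q$ odd. Since $q\ge 3$, $\mathcal{O}$ has at least three points, no three collinear, so I can fix projective coordinates with $P_0=(1,0,0)$, $P_1=(0,1,0)$, $P_2=(0,0,1)$ on $\mathcal{O}$. Through each point of $\mathcal{O}$ pass $q$ secants (one to each other point of $\mathcal{O}$, all distinct since no three points are collinear) and exactly one tangent; write the tangents at $P_0,P_1,P_2$ as $t_0\colon X_2=a_0X_1$, $t_1\colon X_0=a_1X_2$, $t_2\colon X_1=a_2X_0$ with $a_0,a_1,a_2\in\F_q^{\ast}$ (none is $0$ or ``$\infty$'', as $t_i$ differs from both sides of the triangle through $P_i$). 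The first step is the identity $a_0a_1a_2=-1$. Fix the vertex $P_0$: every line through it is $X_2=cX_1$ for some $c\in\F_q$ or else the side $\overline{P_0P_2}\colon X_1=0$ (``$c=\infty$''), with $c=a_0$ giving $t_0$, $c=0$ giving $\overline{P_0P_1}$, $c=\infty$ giving $\overline{P_0P_2}$, and a secant meeting $\mathcal{O}$ in $R=(r_0,r_1,r_2)$ giving $c=r_2/r_1$ (all coordinates of such an $R$ are nonzero, else three oval points would be collinear). As the $q$ secants through $P_0$ meet the $q$ points of $\mathcal{O}\setminus\{P_0\}$ bijectively, $\{a_0\}\cup\{r_2/r_1 : R\in\mathcal{O}\setminus\{P_0,P_1,P_2\}\}=\F_q^{\ast}$; multiplying all elements of $\F_q^{\ast}$ together (the product is $-1$ when $q$ is odd) gives $a_0\prod_R r_2/r_1=-1$. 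The analogous identities at $P_1$ and $P_2$, multiplied, make the products over $R$ cancel, leaving $a_0a_1a_2=-1$.

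For the second step, take any $R=(r_0,r_1,1)\in\mathcal{O}\setminus\{P_0,P_1,P_2\}$ — every oval point off the three coordinate lines has this shape — with tangent $t_R$. Applying the Lemma of Tangents to the inscribed triangle $P_0P_1R$ (after the linear map carrying it to the fundamental triangle) expresses the tangent-parameters at $P_0$ and $P_1$ through the known lines $t_0,t_1$ and the coordinates of $R$; the product relation then determines the tangent-parameter at $R$, equivalently it locates the point $t_R\cap\overline{P_0P_1}=(a_0a_1-a_0r_0,\,a_0r_1-1,\,0)$. Repeating with the triangle $P_0P_2R$ locates $t_R\cap\overline{P_0P_2}=(r_1-a_2r_0,\,0,\,a_2(1-a_0r_1))$. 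These two points and $R$ all lie on the single line $t_R$, so the determinant of the $3\times3$ matrix they form vanishes; expanding it and using $a_0a_1a_2=-1$ to collapse the cubic terms gives $-2(a_0r_1-1)\bigl(a_0a_2r_0r_1+r_1-a_2r_0\bigr)=0$. Here $2\ne0$ since $q$ is odd, and $a_0r_1-1\ne0$ since $a_0r_1=1$ would place $R$ on $t_0$, which meets $\mathcal{O}$ only in $P_0\ne R$. Hence $a_0a_2r_0r_1+r_1-a_2r_0=0$, so $R$ lies on the conic $\mathcal{D}\colon a_0a_2X_0X_1+X_1X_2-a_2X_0X_2=0$.

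To finish, note that $P_0,P_1,P_2$ also satisfy the equation of $\mathcal{D}$, so $\mathcal{O}\subseteq\mathcal{D}$. Because $q$ is odd and $a_0,a_2\ne0$, the symmetric matrix of $\mathcal{D}$ is nonsingular (its determinant is $-a_0a_2^{2}/4$), so $\mathcal{D}$ is a nondegenerate conic and hence has exactly $q+1$ points; since $|\mathcal{O}|=q+1$ as well, $\mathcal{O}=\mathcal{D}$ is a conic.

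The main obstacle is the Lemma of Tangents: it is the one step where ``$q$ odd'' is genuinely used (via $\prod_{x\in\F_q^{\ast}}x=-1$), and the delicate part is the bookkeeping turning the pencil of lines through a vertex into a statement about $\F_q^{\ast}$. Everything afterwards is routine but needs care — the two coordinate changes used to apply the Lemma to $P_0P_1R$ and to $P_0P_2R$, and the determinant simplification, where one should notice that the harmless extra factor $a_0r_1-1$ appears exactly because $a_0a_1a_2=-1$ has been substituted.
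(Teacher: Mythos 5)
The paper states this result only as a citation to Segre and gives no proof of its own, and your argument is a correct rendition of Segre's classical proof via the Lemma of Tangents: the identity $a_0a_1a_2=-1$, the located points $t_R\cap\overline{P_0P_1}$ and $t_R\cap\overline{P_0P_2}$, the determinant factorisation $-2(a_0r_1-1)(a_0a_2r_0r_1+r_1-a_2r_0)$, and the nondegeneracy determinant $-a_0a_2^2/4$ all check out. Since that is exactly the approach of the cited source, there is nothing to add beyond noting it is correct.
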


In the same year, Segre showed the following.
\begin{theorem}{\rm\cite{Segre2}}\label{incompleteq}
In $\PG(2,q)$, $q$ odd, every $q$-arc can be completed to a conic. 
\end{theorem}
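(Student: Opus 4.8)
\noindent\emph{Sketch of proof.} The plan is to reduce the statement to an extension problem and then quote the theorem of Segre quoted above that every oval in $\PG(2,q)$, $q$ odd, is a conic. Let $K$ be a $q$-arc in $\PG(2,q)$, $q$ odd. Since any set of $q+1$ points no three of which are collinear is an oval, it suffices to produce a point $R\notin K$ for which $K\cup\{R\}$ is again an arc: then $K\cup\{R\}$ is an oval, hence a conic by Segre's theorem, and $K$ is contained in it. (The smallest values of $q$ may also be treated by a direct count.)

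Next I would collect the elementary incidence data. Because $|K|=q$, every line meets $K$ in at most two points; through each point of $K$ there pass $q-1$ secants and hence, among the $q+1$ lines through that point, exactly $2$ tangents; and for a point $N\notin K$ lying on $s$ secants, counting the points of $K$ on the $q+1$ lines through $N$ shows that $N$ lies on exactly $q-2s$ tangents. In particular this number is odd --- this is where the hypothesis that $q$ is odd enters --- so $s\le(q-1)/2$, and what we are after is a point with $s=0$, equivalently a point on $q$ tangents: for such a point $R$ no secant of $K$ passes through it, so $K\cup\{R\}$ is an arc.

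The tool that produces such a point is Segre's Lemma of Tangents. Choosing three points $A_1,A_2,A_3$ of $K$ as the vertices of a coordinate triangle and labelling the $q-1$ non-coordinate lines through each $A_i$ by the elements of $\F_q^{*}$ --- the $q-3$ secants to the remaining points of $K$ together with the $2$ tangents exhausting these labels --- the identity $\prod_{x\in\F_q^{*}}x=-1$ in $\F_q$, together with the fact that every point of $K$ outside the triangle has all its coordinates nonzero, yields
\[
a_1a_2\cdot b_1b_2\cdot c_1c_2=-1,
\]
where $a_1,a_2$, $b_1,b_2$, $c_1,c_2$ are the slopes of the two tangents at $A_1$, $A_2$, $A_3$ respectively. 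A short computation in the same coordinates shows in addition that the tangents of slopes $a_i$, $b_j$, $c_k$ at $A_1$, $A_2$, $A_3$ are concurrent precisely when $a_ib_jc_k=1$.

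The step I expect to be hardest is the passage from this local relation at one triangle to a single point lying on a tangent at \emph{every} point of $K$. The configuration to aim for is clear: if $K$ were a conic with one point $R$ deleted, the $2q$ tangents of $K$ would split into the $q$ lines joining $R$ to the other points of $K$, which form a pencil through $R$, and the $q$ genuine conic-tangents, which form a dual conic with one point removed --- so that, dually, the $2q$ tangent lines together with one further line are the points of the degenerate cubic obtained as the product of a line and a conic. The plan is then to force this picture for an arbitrary $q$-arc by applying the Lemma of Tangents to the triangles $\{A_1,A_2,A_k\}$ as $A_k$ ranges over $K$, and combining the resulting relations with the fact that each exterior point already carries an odd, hence nonzero, number of tangents, so as to leave no alternative to exactly $q$ of the tangents being concurrent at a point $R$. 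Once $R$ has been produced, the incidence count above shows that $K\cup\{R\}$ is an oval, Segre's theorem identifies it as a conic, and the proof is complete.
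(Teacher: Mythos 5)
This theorem is quoted in the paper as a known result of Segre, with a citation but no proof, so there is nothing in the paper to compare against line by line; I am therefore judging your sketch on its own terms. Your overall strategy is the standard one and the preliminary steps are correct: reducing to the existence of a single point $R$ extending $K$ to an oval and then invoking Segre's oval-is-a-conic theorem is exactly right; the incidence counts (two tangents at each point of $K$, and $q-2s$ tangents through an external point on $s$ secants, hence an odd number) are correct and correctly identify where the parity of $q$ enters; and the Lemma of Tangents is stated with the right sign, since Wilson's identity at each of the three vertices contributes $(-1)^3$ while the secant parameters cancel in concurrent triples.

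The genuine gap is the one you yourself flag: the passage from the relation $a_1a_2\,b_1b_2\,c_1c_2=-1$ (for each inscribed triangle) to the existence of a point lying on a tangent at \emph{every} point of $K$ is not an argument but a description of the desired conclusion. Saying that one should apply the lemma to the triangles $\{A_1,A_2,A_k\}$ and ``combine the resulting relations\ldots so as to leave no alternative to exactly $q$ of the tangents being concurrent'' skips the entire content of Segre's proof. Two concrete obstacles: first, the parameters $c_1,c_2$ at $A_k$ are computed in a coordinate frame that changes with $k$, so the relations for different triangles do not combine by mere multiplication; second, the parity observation that every external point lies on at least one tangent, combined with standard double counting of tangent--point incidences, is nowhere near strong enough to force a point on $q$ tangents (the counts are consistent with every external point lying on $1$ or $3$ tangents). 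What Segre actually does here is show that the $2q$ tangent lines lie on an algebraic envelope of class $4$ in the dual plane and then analyse how that envelope must split, a pencil of $q$ lines through the sought point $R$ emerging as one component; some such global algebraic step (or an equivalent careful elimination) is indispensable and is missing from your proposal. Until that step is supplied, the proof is incomplete.
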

In the case that $q$ is even, Tallini showed that a $q$-arc in $\PG(2,q)$, $q$ even, can be completed to a hyperoval \cite{Tallini}. (Qvist had shown that an oval in any plane of even order completes to a hyperoval in 1952 \cite{qvist}). It is easy to see that the extension of a $q$-arc to an oval is unique if $q$ is odd,  and the extension of a $q$-arc to a hyperoval is unique if $q$ is even and larger than two. As opposed to ovals in $\PG(2,q)$, $q$ odd, hyperovals in $\PG(2,q)$, $q$ even, have not been classified. We refer to \cite{Penttila} for an overview of the known hyperovals.

The incompleteness of $q$-arcs in $\PG(2,q)$ raises the question of determining the size of the {\em second} largest complete arc in $\PG(2,q)$, which we will denote by $m_2'(q)$. The answer to this question remains open. The following results for odd $q$, giving upper bounds for $m_2'(q)$, are currently the best known.

\begin{theorem} Let $m_2'(q)$ denote the size of the second largest complete arc in $\PG(2,q)$, $q=p^h$, $p$ prime, $p>2$, then
\begin{itemize}
\item[(1)] \cite{Hirschfeld1} $m_2'(q)\leq q-\sqrt{q}/2+5$ if $p\geq 5$.
\item[(2)] \cite{Hirschfeld2} $m_2'(q)\leq q-\sqrt{q}/2+3$ if $q\geq 23^2$,  $q\neq5^5, 3^6$, $h$ even for $p = 3$.
\item[(3)] \cite{Voloch2} $m_2'(q)\leq q-\sqrt{pq}/4+\frac{29p}{16}+1$ if $h$ is odd.
\item[(4)]\cite{Voloch} $m_2'(q)\leq \frac{44}{45}q+\frac{8}{9}$ if $h=1$.
\end{itemize}

\end{theorem}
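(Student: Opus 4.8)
The four inequalities are the currently best known upper bounds for $m_2'(q)$, taken from \cite{Hirschfeld1}, \cite{Hirschfeld2}, \cite{Voloch2} and \cite{Voloch}; what unifies their proofs is Segre's method of attaching an algebraic curve to a complete arc, so the plan is to organise a proof of all four around that common scheme and then indicate where the statements diverge. Let $A$ be a complete $k$-arc in $\PG(2,q)$, $q$ odd, which is not a conic. Through each point of $A$ there pass $k-1$ bisecants, so each point lies on exactly $t:=q+2-k$ tangent (unisecant) lines; since $A$ is complete and $k<q+1$ (the conic being the unique largest complete arc, by Segre's theorem), one has $t\geq 2$, and the goal is to bound $t$ from below, which is exactly an upper bound on $k$.

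The first step I would carry out is \emph{Segre's lemma of tangents}: after coordinatising a triangle whose vertices are three points of $A$, the parameters of the tangent lines at the three vertices satisfy a fixed multiplicative relation (of value $-1$ when $q$ is odd). Running this relation over all triangles of $A$ forces the $kt$ tangent lines to lie on a single algebraic envelope $\Gamma$, and for $q$ odd this envelope has class $2t$; dually, $\Gamma$ corresponds to a plane curve $C$ over $\F_q$ of degree $2t$ on which the tangent directions of $A$ supply a large, explicitly countable set of $\F_q$-rational branches. Completeness enters as the statement that no further point can be adjoined to $A$, which translates into the absence of an extra pencil of tangents, i.e.\ into control of which linear or conic components $\Gamma$ may contain.

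The second step is to bound the number of $\F_q$-rational points (branches) of $C$ from above and compare it with the count produced by the $kt$ tangents. For an absolutely irreducible component of degree $d$ the Hasse--Weil bound gives at most $q+1+(d-1)(d-2)\sqrt q$ rational points. I would then show that when $t$ is small relative to $\sqrt q$ the curve $C$ cannot support the forced number of branches unless it splits off a component dual to a conic; that component exhibits $A$ as a subset of a conic, contradicting completeness unless $A$ is the whole conic of size $q+1$. Choosing the threshold on $t$ optimally already yields a bound of the shape $k\leq q-\sqrt q+\text{const}$, of the right order.

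Finally, the four sharper statements come from refining this comparison. Improving the coefficient of $\sqrt q$ from $1$ to $\tfrac12$ in (1) and (2) is obtained by a closer analysis of the components of $C$, isolating an absolutely irreducible component of degree close to $t$ rather than $2t$ and applying Hasse--Weil to it; the excluded values $q=5^5,3^6$ and the restriction to even $h$ for $p=3$ are exactly the cases where this component analysis degenerates and must be treated separately. For $h$ odd (item (3)) and for prime $q$ (item (4)) the Hasse--Weil estimate is replaced by the St\"ohr--Voloch bound, which uses the Frobenius and the osculating behaviour of $C$ in characteristic $p$ to beat Hasse--Weil whenever $q$ is not a large square, producing the $\sqrt{pq}$ term and the explicit fraction $\tfrac{44}{45}q$. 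I expect the main obstacle to be the curve-theoretic input of the second and last steps: proving that the relevant component of $C$ is absolutely irreducible and of the claimed degree, and disposing of the low-degree and small-order exceptions, since the Hasse--Weil and St\"ohr--Voloch estimates become decisive only once the irreducibility and degree of $C$ have been pinned down.
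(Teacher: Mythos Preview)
The paper gives no proof of this theorem at all: it is stated purely as a list of results quoted from \cite{Hirschfeld1}, \cite{Hirschfeld2}, \cite{Voloch2} and \cite{Voloch}, and is used only as background to make the hypothesis $s>m_2'(q^n)$ of the main theorem quantitative. So there is nothing in the paper to compare your proposal against.

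That said, your sketch is an accurate high-level summary of how the cited authors actually obtain these bounds: Segre's lemma of tangents producing a dual curve of degree $2t$, counting its $\F_q$-rational branches against the $kt$ tangent lines, and then feeding in Hasse--Weil (for (1) and (2)) or the St\"ohr--Voloch $p$-adic bound (for (3) and (4)). Where your outline is vaguest is exactly where the real work in those papers lies, namely the component analysis: one has to show that the degree-$2t$ envelope, after stripping off the repeated conic component coming from the arc itself, leaves an absolutely irreducible piece of the right degree to which the point-count estimates apply, and to handle the Frobenius-classicality hypotheses needed for St\"ohr--Voloch. If you were asked to supply a proof here rather than a citation, those are the steps that would need to be made precise; for the purposes of the present paper, however, a citation is all that is required.
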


There exist non-Desarguesian projective planes that contain complete $q$-arcs. For odd order, there are examples of complete $q$-arcs in the Hall plane of order $9$ (Menichetti \cite{Men1}) and the Hughes plane of order $9$ (Barlotti \cite{Barlotti}). Complete $q$-arcs in all Hall planes of even order were constructed by Menichetti \cite{Men2}.

\section{Generalised quadrangles from arcs}

The generalised quadrangle $T_2(\mathcal{O})$, where $\mathcal{O}$ is an oval of $\PG(2,q)$, was constructed by Tits and published by Dembowski \cite{Dembowski} as follows. Let $\mathcal{O}$ be an oval of $H=\PG(2,q)$, embed $H$ in $\PG(3,q)$ and define the following incidence structure:

\begin{itemize}
\item[$\P$:] (1) the affine points of $\PG(3,q)$\\
(2) the planes $\neq H$ through a tangent line to $\mathcal{O}$\\
(3) the plane $H$
\item[$\L$:] (1) the lines through the points of $\mathcal{O}$, not in $H$\\
(2) the points of $\mathcal{O}$.
\end{itemize}
Then $(\P,\L,\I)$, where incidence is natural, is a GQ of order $q$. Ahrens and Szekeres \cite{AS} and M. Hall, Jr \cite{Hall}, gave a construction of a GQ of order $(q-1,q+1)$ using a hyperoval at infinity. More precisely, when $\mathcal{H}$ is a hyperoval in $H=\PG(2,q)$, $q$ even, where $H$ is embedded in $\PG(3,q)$, the incidence structure $(\P,\L)$, where 
\begin{itemize}
\item[$\P$:] the affine points of $\PG(3,q)$
\item[$\L$:] the lines through the points of $\mathcal{H}$, not in $H$
\end{itemize}
is a GQ$(q-1,q+1)$.
For $q$ odd, Ahrens and Szekeres gave a rather complicated description of a GQ$(q-1,q+1)$ by using cubic curves in a plane \cite{AS}. We denote this GQ by AS$(q)$.

A {\em regular point} $x$ of a GQ of order $(s,t)$ is a point for which $|\{x,y\}^{\perp \perp}|=t+1$ for all points $y\not \sim x$.

{\em Payne derivation} is a process, starting with a GQ of order $q$, with points $\P$ and lines $\L$ that has a {\em regular} point, say $x$, to obtain a GQ $(\P',\L')$ of order $(q-1,q+1)$ as follows (see \cite{Payne0}):
\begin{itemize}
\item[$\P'$:] $\P\setminus x^\bot$
\item[$\L'$:] (1) elements of $\L$, not containing $x$\\
(2) $\{x,y\}^{\bot\bot}$, where $y$ and $x$ are not collinear in $(\P,\L)$ (i.e. the {\em hyperbolic lines} through $x$).
\end{itemize}

The following construction for a generalised quadrangle using a $q$-arc was given by Payne \cite{Payne2}. Let $\K$ be a $q$-arc in $H=\PG(2,q)$, $q$ even, and let $Q$ and $R$ be the points completing $\K$ to a hyperoval. Embed $H$ as the plane at infinity of $\PG(3,q)$, where $q$ is even. Let $(\P,\L,\I)$ be the following incidence structure, where incidence is natural:

\begin{itemize}
\item[$\P$:] the lines through the points of $\K$, not in $H$
\item[$\L$:] (1) the affine points of $\PG(3,q)$\\
(2) the planes through a line of the form $\langle P,Q\rangle$, $P$ in $\K$\\
(3) the planes through a line of the form $\langle P,R\rangle$, $P$ in $\K$.
\end{itemize}
It is not too hard to check that $(\P,\L,\I)$ is a generalised quadrangle of order $(q-1,q+1)$ (the proof goes along the lines of the proof of Theorem \ref{triviaal}).

 We can modify this construction to construct a generalised quadrangle of order $(q-1,q+1)$. This generalised quadrangle is isomorphic to the Ahrens-Szekeres GQ of order $q$ if $q$ is odd.

\begin{theorem}\label{triviaal}
Let $\K$ be a $q$-arc in $H=\PG(2,q)$ and embed $\PG(2,q)$ in $\PG(3,q)$. Let $(\P,\L,\I)$ be the following incidence structure, where incidence is natural.

\begin{itemize}
\item[$\P$:] the lines through the points of $\K$, not in $H$
\item[$\L$:] (1) the affine points of $\PG(3,q)$\\
(2) the planes, different from $H$, through a tangent line to $\K$.
\end{itemize}
Then $(\P,\L,\I)$ is a GQ of order $(q-1,q+1)$, isomorphic to $AS(q)$ if $q$ is odd.
\end{theorem}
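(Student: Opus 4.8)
The plan is to verify the generalised-quadrangle axioms directly for $(\P,\L,\I)$, and then to identify the structure with $AS(q)$ via Payne derivation of $T_2(\O)$, where $\O$ is the conic (or oval) completing $\K$. Recall that a finite incidence structure is a $GQ(s,t)$ if every line has $s+1$ points, every point lies on $t+1$ lines, two points lie on at most one common line, and for every point $P$ and line $\ell$ with $P \not\I \ell$ there is a unique point on $\ell$ collinear with $P$. Here $s+1 = q$ and $t+1 = q+2$, so I must show every ``line'' of type (1) and (2) carries exactly $q$ of the chosen affine lines, and every such affine line lies in exactly $q+2$ of the ``lines''.

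First I would set up coordinates: take $\K$ to be a $q$-arc in $H = \PG(2,q)$, and let $Q, R$ be the two points extending $\K$ to an oval/hyperoval-type configuration in the following sense — since a $q$-arc in $\PG(2,q)$ lies on a unique conic when $q$ is odd (Theorem~\ref{incompleteq}) and extends to a unique hyperoval when $q$ is even, in both cases there are exactly two further points $Q,R$ on the tangent lines; equivalently, each of the $q$ points of $\K$ lies on exactly one tangent line of $\K$ inside $H$, and these $q$ tangent lines are the lines joining pairs among $\{Q,R\}$... more carefully, each point $P \in \K$ has a unique tangent $t_P$ in $H$, and $t_P$ passes through one of $Q$ or $R$ (when $q$ is odd these coincide on the conic's structure; when $q$ is even all tangents pass through the nucleus). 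The points of $(\P,\L,\I)$ are the $q^2$ affine lines meeting $H$ in a point of $\K$ ($q$ points of $\K$, $q$ affine lines through each), and I count incidences: an affine point $x \notin H$ lies on exactly $q$ such lines (one to each point of $\K$), so lines of type (1) have $q$ points; a plane $\pi \neq H$ through a tangent $t_P$ meets $H$ in $t_P$, which contains exactly one point of $\K$, namely $P$, so the lines of $\pi$ through points of $\K$ not in $H$ are precisely the $q$ lines of $\pi$ through $P$ other than $t_P$ — giving $q$ points on a type-(2) line. For the dual count, a fixed affine line $m$ through $P \in \K$ lies on the one affine point... no: $m$ as a point of $\P$ is incident with the type-(1) lines equal to its $q$ affine points, plus the type-(2) lines equal to the planes through $t_P$ containing $m$, of which there is exactly one (the plane $\langle m, t_P\rangle$); wait that gives $q+1$, so I need to recount — the planes through the tangent line $t_P$ number $q+1$ (they are $H$ and $q$ others), but only... hmm. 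Let me instead recount: through the line $t_P$ there are $q+1$ planes including $H$, so $q$ planes $\neq H$; the affine line $m$ (not in $H$, meeting $H$ at $P \in t_P$) spans with $t_P$ a unique plane, so $m$ is in exactly one type-(2) line. Then $m$ is on $q$ affine points plus $1$ type-(2) line, total $q+1$ lines, which would make $t+1 = q+1$, i.e. $t = q$ — not $q+1$. So the construction must also allow planes through tangent lines at a point of $\K$ different from where $m$ meets $H$; but such a plane does not contain $m$. The resolution is that a plane through $t_P$ contains several affine lines through $P$, and indeed an affine line through $P$ lies in $q+1$ planes through $P$, exactly $q$ of which contain $t_P$? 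No — only one plane contains both $m$ and $t_P$. I think the correct reading is that the type-(2) lines through $m$ are the planes $\neq H$ containing $m$ and some tangent line — but $m$ meets $H$ only at $P$, and the only tangent through $P$ in $H$ is $t_P$, so again one plane. Thus I expect $t = q$...

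Given the stated order $(q-1, q+1)$, the correct incidence for a ``point'' $m$ (affine line through $P\in\K$) with a ``line'' of type (2) (plane $\pi\neq H$ through a tangent) should be $m \subset \pi$, and the number of such $\pi$ is: planes through $P$ other than $H$ number $q^2 + q$... no, planes through a point number $q^2+q+1$, through a line number $q+1$. The planes $\neq H$ through $P$: there are $q^2$ of them that don't contain... this is getting intricate, so here is the real plan: I will not belabor the raw count but instead prove the result by exhibiting an explicit isomorphism. Embed $\PG(3,q)$ in $\PG(4,q)$ at infinity? Better: start from $T_2(\O)$ where $\O = \K \cup \{N\}$ (with $N$ the nucleus, $q$ even) or $\O = \K \cup \{P_0\}$ the conic completion ($q$ odd) inside a plane $H_0 = \PG(2,q)$ embedded in $\Sigma = \PG(3,q)$, pick the regular point $x$ of $T_2(\O)$ to be the point of $\O \setminus \K$ (a type-(2) point of $T_2(\O)$, which is always regular), and carry out Payne derivation. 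I would then match: the points $\P' = \P \setminus x^{\perp}$ of the derived GQ correspond to the affine lines of $\Sigma$ through points of $\K$ (the lines of $T_2(\O)$ of type (1) through the point $x \in \O$, and the affine points of $\Sigma$, are exactly what gets deleted or reorganized), the type-(1) lines of my construction (affine points of $\PG(3,q)$, here a different $\PG(3,q)$) correspond to the affine points of $\Sigma$, and the type-(2) lines (planes through tangents) correspond to the hyperbolic lines $\{x,y\}^{\perp\perp}$. The main obstacle is precisely this dictionary — getting the dimension bookkeeping right so that ``planes of $\PG(3,q)$ through a tangent line'' on the one side corresponds to ``hyperbolic lines through $x$'' on the other, which requires carefully understanding the span-closure operation $\{x,y\}^{\perp\perp}$ in $T_2(\O)$ and checking that the span of a tangent plane configuration recovers exactly $q$ of the relevant points. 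Once that correspondence is set up, the GQ axioms for $(\P,\L,\I)$ follow for free from those of the Payne-derived GQ, and the isomorphism with $AS(q)$ for $q$ odd is immediate since $AS(q)$ is by definition (via Payne's identification) the Payne derivation of $T_2(\text{conic})$ at a regular point of type (2). For $q$ even the same construction yields a $GQ(q-1,q+1)$ by the same derivation argument, which is the content of Payne's construction recalled above; the only new point is the uniform description and the identification with $AS(q)$ in odd characteristic.

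Thus, concretely, the steps are: (i) recall that in $T_2(\O)$ every point of type (2) is regular (standard, e.g.\ Payne–Thas); (ii) write down Payne derivation at such a point $x$ and read off its point set and line set; (iii) establish a bijection between these and the $\P$ and $\L$ of the theorem, using field reduction-free coordinates in which $\O = \{(t, t^2, 1) : t \in \F_q\} \cup \{(0,1,0)\}$ (odd $q$) or together with its nucleus (even $q$), with $x = (0,1,0)$ and $\K$ the affine part; (iv) verify the bijection carries incidence to incidence, the crux being the hyperbolic-line computation; (v) conclude the axioms and the isomorphism with $AS(q)$ for odd $q$. I expect step (iv), and in particular showing that the ``planes $\neq H$ through a tangent to $\K$'' are exactly the hyperbolic lines $\{x,y\}^{\perp\perp}$ with their correct point sets, to be where essentially all the work lies; everything else is definition-chasing.
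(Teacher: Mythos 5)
There is a genuine error at the heart of your attempted direct verification: you assert that each point $P\in\K$ has a \emph{unique} tangent line $t_P$ in $H$. A point of a $k$-arc in $\PG(2,q)$ lies on $k-1$ secants and hence on $q+2-k$ tangents; for $k=q$ this is \emph{two} tangents per point (for $q$ odd these are the tangent to the completing conic at $P$ and the line joining $P$ to the completing point; for $q$ even they are the lines from $P$ to the two points completing $\K$ to a hyperoval). This is exactly what repairs the count you could not make work: an affine line $m$ through $P$ lies on its $q$ affine points plus the \emph{two} planes $\langle m,t\rangle$, one for each of the two tangents $t$ at $P$, giving $t+1=q+2$ as required. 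The paper's proof is precisely this direct verification (points per line, lines per point, and the unique-collinear-point axiom checked separately against lines of type (1) and type (2)), carried out for all $q$; your proposal abandons it at the point where the miscount appears, so as written the GQ axioms are not established at all --- in particular not for $q$ even, where your fallback does not obviously apply.

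Your fallback via Payne derivation is essentially the paper's argument for the identification with $AS(q)$ in odd characteristic, but it has two gaps as stated. First, the element $x$ of $\O\setminus\K$ is a \emph{line} of $T_2(\O)$ (the lines of type (2) of $T_2(\O)$ are the points of $\O$), not a point, so one cannot Payne-derive $T_2(\O)$ at it; one must pass to the dual and derive $T_2(\O)^D$ at the point corresponding to that line, which is what the paper does, and then the dictionary you call the ``crux'' (hyperbolic lines $\{x,y\}^{\perp\perp}$ becoming the planes through $x$ and a point of $C$, i.e.\ the planes through the tangents of $\K$ passing through $x$; the surviving old lines supplying the affine points and the planes through the other tangent at each point of $\K$) still has to be computed --- you explicitly leave it as an expectation. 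Second, $AS(q)$ is \emph{not} defined as a Payne derivation; the identification needs Payne's theorem that $AS(q)$ is the derivation of $\mathrm{W}(q)$ at a regular point together with $T_2(C)\cong \mathrm{Q}(4,q)\cong \mathrm{W}(q)^D$, so that both quadrangles are derivations of $\mathrm{W}(q)$ (and transitivity of the symplectic group on points makes the choice of regular point immaterial). With the two-tangent correction, the rest of your outline can be completed along the paper's lines, but as it stands the proof is not there.
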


\begin{proof}
A line $L$ of $\PG(3,q)$, not in $H$, through a point $P$ of $\K$ contains $q$ affine points, and, since $P$ lies on exactly two tangent lines to $\K$, $L$ lies in exactly two planes through a tangent line to $\K$. This implies that a point of $\mathcal{P}$ lies on $q+2$ lines of $\L$.

An affine point of $\PG(3,q)$ lies on $q$ lines through a point of $\K$, and there are exactly $q$ affine lines through a point of $\K$ in a tangent plane to $\K$, which shows that a line of $\L$ contains $q$ points of $\P$.

Let $P$ be a point of $\P$ and let $L$ be a line of type (1) of $\L$, not through $P$. If the plane $\langle P,L\rangle$ is a secant plane to $\K$ then the unique line through $L$ and the second intersection point of $\langle P,L\rangle$ with $\K$, is the unique point of $\P$ on $L$ collinear with $P$. If the plane $\langle P,L\rangle$ meets the plane at infinity in a tangent line at the point $Q:=P\cap \K$, then the line $\langle L,Q\rangle$ is the unique point of $\P$ collinear with $P$ on $L$.

Let $P$ be a point of $\P$ and let $L$ be a line of type (2) of $\P$, not through $P$, say $L$ is tangent to $\K$ in the point $Q$. If $Q$ does not lie on $P$, then the plane $L$ and the line $P$ meet in a unique point $R$ which is affine. So the line $\langle R,Q\rangle$ is the unique point of $\P$ on $L$ collinear with $P$. If $Q$ lies on $P$, then let $T$ be the tangent line to $\K$ through $Q$, different from $L\cap H$. The intersection line of $\langle P,T\rangle$ with the plane $L$ is the unique point of $\P$ on $L$, collinear with $P$.

This proves that $(\P,\L,\I)$ is a GQ of order $(q-1,q+1)$.

Now, suppose $q$ is odd. By Theorem \ref{incompleteq}, the $q$-arc $\K$ lies on a unique conic $C$. Let $x$ be the point added to $\K$ to obtain $C$.

The obtained GQ is the Payne derivation of $T_2(C)^D$ at the point $x$: the points of the Payne derivation are the points, not collinear with $x$, hence these are all lines of $\PG(3,q)$, through $C\setminus\{x\}$. The first set of lines of the Payne derivation are the lines, not through $x$, hence, these are all affine points and all tangent planes to $C$, but not the tangent planes to $x$. The second set of lines of the Payne derivation are the hyperbolic lines through $x$. In this case, the set $\{x,y\}^{\perp \perp}$, where $y \not \sim x$ (hence, $y$ is a line through a point $Q \neq x$ of $C$) consists of the set of all affine lines in the plane $\langle y,x\rangle$. This set of lines can be identified with the plane $\langle y,x\rangle$, hence, the second set of lines are exactly the planes through $x$ and a point of $C$.

It was shown by Payne \cite{Payne3} that AS$(q)$ is obtained by Payne deriving W$(q)$ at a regular point. Now $T_2(C)$, where $C$ is a conic in $\PG(2,q)$, $q$ odd, is isomorphic to Q$(4,q)$ (see e.g. \cite[Theorem 3.2.2]{gq}), and Q$(4,q)$ is isomorphic to W$(q)^D$ (see e.g. \cite[Theorem 3.2.1]{gq}). This shows that AS$(q)$ and $(\P,\L,I)$ are Payne derived from W$(q)$ at a regular point, and hence, are isomorphic.
\end{proof}

\section{Pseudo-ovals and related objects}

A Laguerre plane is an incidence structure with points $\P$, lines $\L$ and circles $\C$ such that $(\P,\L,\C,\I)$ satisfies the following four axioms:

\begin{itemize}
\item[AX1] Every point lies on a unique line
\item[AX2] A circle and a line meet in a unique point
\item[AX3] Through $3$ points, no two on a line, there is a unique circle of $\C$
\item[AX4] If $P$ is a point on a fixed circle $C$ and $Q$ a point, not on the line through $P$ and not on the circle $C$, then there is a unique circle $C'$ through $P$ and $Q$, meeting $C$ only in the point $P$.
\end{itemize}

In a finite Laguerre plane, every circle contains $n+1$ points for some $n$; this constant $n$ is called the {\em order} of the Laguerre plane.

Starting from a point $P$ of a Laguerre plane $(\P,\L,\C,\I)$, we obtain an affine plane $(\P',\L',\I)$, where incidence is natural, as follows. 

\begin{itemize}
\item[$\P'$:] Points of $\P$, different from $P$ and not collinear with $P$
\item[$\L'$:] (1) lines of $\L$, not through $P$\\
(2) elements of $\C$, through $P$.
\end{itemize}
The obtained affine plane $(\P',\L')$ is called the {\em derived affine plane} at $P$.

The classical Laguerre plane $L(q)$ of order $q$ is obtained from a quadratic cone $K$ of $\PG(3,q)$, with vertex $V$. Points of $L(q)$ are points of $K$, different from $V$, lines of $L(q)$ are generators of $K$ and circles are the intersections of the planes, not through $V$, with $K$. 

A Laguerre plane is called {\em Miquelian} if for each eight pairwise different points $A,B,C,D,E, F,G,H$ it follows from $(ABCD)$,
$(ABEF)$, $(BCFG)$, $(CDGH)$, $(ADEH)$ that $(EFGH)$, where $(PQRS)$ denotes that $P,Q,R,S$ are on a common circle.

\begin{theorem} \cite{Chen}
If for one point of a Laguerre plane of odd order, the derived affine plane is Desarguesian, then the Laguerre plane is Miquelian.
\end{theorem}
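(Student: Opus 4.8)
The plan is to use the Desarguesian derived affine plane at the distinguished point to coordinatise the Laguerre plane, to deduce from Segre's theorem that every circle missing that point is the graph of a quadratic polynomial, and then to recognise the resulting incidence structure as the classical Laguerre plane $L(q)$, which is Miquelian. So first I would fix the point $P$ at which the derived affine plane $\A_P$ is Desarguesian, so that $\A_P\cong\AG(2,q)$ (in particular the order $q$ is then a prime power). Write $\ell$ for the line of the Laguerre plane through $P$. Recall that the lines of $\A_P$ are the lines $\ne\ell$ of the Laguerre plane together with the circles through $P$, that the lines of a Laguerre plane partition its points, and that two circles through $P$ are parallel in $\A_P$ exactly when they meet only in $P$ (this is AX4). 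Choosing coordinates on $\AG(2,q)=\F_q^2$, I may assume that the lines $\ne\ell$ become the vertical lines $x=c$ and the circles through $P$ the non-vertical lines $y=mx+b$. By AX2 a circle $C$ not through $P$ meets $\ell$ in a single point $Q_C\ne P$ and meets each vertical line once, so $C=\{(x,f_C(x)):x\in\F_q\}\cup\{Q_C\}$ for some function $f_C\colon\F_q\to\F_q$. Since two circles meet in at most two points (if they met in three, either two of them lie on a common line, contradicting AX2, or no two do and AX3 forces the circles equal), $C$ meets every circle through $P$ in at most two affine points; hence the graph of $f_C$ is a $q$-arc of $\AG(2,q)$, and adjoining the common point at infinity of the vertical lines turns it into a $(q+1)$-arc, that is, an oval $\O_C$ of $\PG(2,q)$.

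This is the step that uses the odd order: by Segre's theorem every oval of $\PG(2,q)$ with $q$ odd is a conic, so each $\O_C$ is a conic which is a graph over all of $\F_q$ and meets the line at infinity only in the vertical point at infinity; such a conic must be a parabola, so $f_C(x)=a_Cx^2+b_Cx+c_C$ with $a_C\ne 0$. Conversely every polynomial of degree two arises, by interpolating three non-collinear points of its graph and invoking AX3, so $C\mapsto f_C$ is a bijection from the circles not through $P$ onto the polynomials of degree exactly two. It remains to control $C\mapsto Q_C$, and I claim that $Q_C$ depends only on the leading coefficient $a_C$. To see this I would examine the instances of AX4 in which $C$ is a circle not through $P$, $R=(r,f_C(r))$ a point of $C$, and $Q$ a varying affine point: apart from one tangent-line position of $Q$, the only circles through $R$ and $Q$ that can meet $C$ just in $R$ are the two circles whose functions are $f_C+\lambda(x-r)^2$ and $f_C+\beta(x-r)$ for the scalars $\lambda,\beta$ forced by $Q$ (here $\beta$ is a fixed nonzero multiple of $\lambda$), and uniqueness in AX4 forces exactly one of these two to have its $\ell$-point different from $Q_C$. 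Running this over all $r$ and all affine $Q$, and using that $\ell\setminus\{P\}$ has only $q-1$ points --- so the $q$ functions $f_C+\gamma(x-r)$, $\gamma\in\F_q$, cannot have pairwise distinct $\ell$-points --- forces the alternative in which $Q_f=Q_g$ whenever $f-g$ has degree at most $1$, i.e.\ whenever $f$ and $g$ have the same leading coefficient. Hence $a\mapsto Q$ is a bijection from $\F_q^{\times}$ onto $\ell\setminus\{P\}$.

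With this in hand, the map sending $(x,y)$ to $(x:y:x^2:1)$, extended to $\ell$ by sending $P$ to $(0:0:1:0)$ and the point of $\ell\setminus\{P\}$ with associated leading coefficient $a$ to $(0:a:1:0)$, is an isomorphism from the Laguerre plane onto the structure of all plane sections, off the vertex, of the quadratic cone $ZW=X^2$ of $\PG(3,q)$ --- that is, onto the classical Laguerre plane $L(q)$. Since $L(q)$ is Miquelian (the eight-point Miquel configuration closes because the cone is an intersection of quadrics, so the plane through seven of the eight points already meets the cone in the eighth), this completes the proof. I expect the main obstacle to be the AX4 analysis in the previous paragraph: Segre's theorem carries all of the geometric content, but one still has to wring out of the axioms that a circle's position on $\ell$ is governed solely by the leading coefficient of its quadratic, and this requires a careful look at exactly which circles can realise the AX4 tangency closure. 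Finally, it should be stressed that the hypothesis ``$q$ odd'' is genuinely necessary and not merely a convenience: in even characteristic the cone over a non-conic oval is an ovoidal Laguerre plane whose derived affine planes are all Desarguesian but which is not Miquelian, so it is precisely the appeal to Segre's theorem --- available only for odd $q$ --- that closes the argument.
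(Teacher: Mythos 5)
This statement is quoted in the paper from Chen--Kaerlein \cite{Chen} without proof, so there is no in-paper argument to compare against; judged on its own, your proof is essentially the original one and is sound. The coordinatisation of the derived plane, the identification of circles off $P$ with exact-degree-two polynomials via Segre, the AX4 analysis showing the trace on $\ell$ depends only on the leading coefficient, and the explicit isomorphism onto the cone model (whence Miquelian by van der Waerden--Smid, which the paper also quotes) is precisely how this result is obtained. Two spots deserve slightly more care than your sketch gives them. First, in the AX4 dichotomy, the statement ``exactly one of $Q_{f+\lambda(x-r)^2}=Q_f$ and $Q_{f+\beta(x-r)}=Q_f$'' holds for \emph{every} admissible pair $(\lambda,\beta)$ with $\lambda\notin\{0,-a_f\}$, $\beta\neq 0$; from this all the $B_\beta$ share one truth value and all the $A_\lambda$ the other, and to rule out the bad alternative you must propagate the dichotomy to each $f+\gamma(x-r)$ (all of which pass through $(r,f(r))$) to conclude that in that alternative the $q$ functions $f+\gamma(x-r)$ would have pairwise distinct $\ell$-points, which is what actually collides with $|\ell\setminus\{P\}|=q-1$; your pigeonhole sentence elides this step. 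Second, ``$f-g$ of degree at most $1$'' includes constant differences, which are not of the form $\beta(x-r)$; you need the two-step trick $f+c=\bigl(f+\beta(x-r)\bigr)-\beta(x-r-c/\beta)$ to cover them. Both repairs are routine, so I regard the proposal as correct.
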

By a theorem of van der Waerden and Smid \cite{vsw}, a Laguerre plane is Miquelian if and only if it is classical.

 A dual pseudo-oval is a set of $q^n+1$ $(2n-1)$-spaces in $\PG(3n-1,q)$ such that any three have an empty intersection. A dual pseudo-oval $\O$  in $\PG(3n-1,q)$ gives rise to a Laguerre plane $L(\O)$ of order $q^n$ in the following way. Embed $H=\PG(3n-1,q)$ as a hyperplane at infinity of $\PG(3n,q)$ and define $L(\mathcal{O})$ to be the incidence structure $(\P,\L,\C)$ with natural incidence and with
\begin{itemize}
\item[$\P$:] $2n$-spaces meeting $H$ in an element of $\O$
\item[$\L$:] elements of $\O$
\item[$\C$:] points of $\PG(3n,q)$, not in $H$.
\end{itemize}
In \cite{Steinke}, Steinke showed that every dual pseudo-oval gives rise to an {\em elation} Laguerre plane and vice versa (see also \cite{Lowen}). 
\begin{theorem} A Laguerre plane L is an elation Laguerre plane if and only if $L\cong L(\O)$ for some dual pseudo-oval $\O$.

\end{theorem}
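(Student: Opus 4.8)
The plan is to prove the two implications separately. The implication ``dual pseudo-oval $\Rightarrow$ elation Laguerre plane'' is a short verification, which I would do first. Given a dual pseudo-oval $\O$ in $H=\PG(3n-1,q)$, embed $H$ as a hyperplane at infinity of $\PG(3n,q)$ and form $L(\O)$ as above (it is a Laguerre plane of order $q^{n}$, as already noted). The group $T$ of translations of the affine space $\PG(3n,q)\setminus H$ consists of collineations of $\PG(3n,q)$ fixing $H$ pointwise; hence each $\tau\in T$ fixes every element of $\O$, carries every $2n$-space meeting $H$ in some $K\in\O$ to a $2n$-space again meeting $H$ in $K$, and permutes the affine points. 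So $T$ induces automorphisms of $L(\O)$ fixing every line --- that is, elations --- and it acts regularly on the affine points, which are exactly the circles of $L(\O)$. Hence $L(\O)$ is an elation Laguerre plane.

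For the converse, let $L$ be an elation Laguerre plane of order $m$ with an elation group $E$ acting regularly on the $m^{3}$ circles, so $|E|=m^{3}$. I would first record two elementary facts. The group $E$ is transitive on the $m$ points of each line $\ell$: for $a,b\in\ell$ pick circles $C\ni a$, $C'\ni b$ and the unique $g\in E$ with $g(C)=C'$; since $g$ fixes $\ell$, $g(a)\in g(C)\cap g(\ell)=C'\cap\ell=\{b\}$. And $E$ acts freely on the set of circles through any fixed point $p$ (an element fixing such a circle fixes a circle, hence is trivial), so, as $|E_{p}|=m^{2}$ equals the number of circles through $p$, the stabiliser $E_{p}$ is regular on them. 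The substantial step is then to prove that $E$ is \emph{elementary abelian}; this must be extracted from the incidence axioms --- notably AX3 together with the parallel axiom AX4, via the structure of the derived affine planes and of the parallel classes of circles through a point, in the spirit of Andr\'e's theory of translation structures. Granting it, $|E|=m^{3}$ is a power of $p$, so $m=p^{n}$ for some $n\ge 1$ and $E$ becomes an $\F_{p}$-vector space $V$ of dimension $3n$; I identify the circle set with $V$ by sending a base circle $C_{0}$ to $0$ (so $g(C_{0})\leftrightarrow g$), and take $q=p$.

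Once $E$ is a vector space the geometry reads off. Since $E$ is abelian and transitive on the points of $\ell$, all stabilisers $E_{p}$ with $p\in\ell$ coincide; call this subgroup $E_{\ell}$, an $\F_{p}$-subspace of $V$ of order $m^{2}=p^{2n}$, hence of projective dimension $2n-1$ in $\PG(3n-1,p)=\PG(V)$, and there are $m+1=p^{n}+1$ of them. If a nontrivial $g$ lay in $E_{\ell_{1}}\cap E_{\ell_{2}}\cap E_{\ell_{3}}$ for three distinct lines, then picking $a_{i}\in\ell_{i}$ (pairwise non-collinear) the unique circle through $a_{1},a_{2},a_{3}$ would be fixed by $g$, contradicting freeness of $E$ on the circles; so any three of the $E_{\ell}$ meet trivially. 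If moreover $E_{\ell_{1}}=E_{\ell_{2}}$ for distinct $\ell_{1},\ell_{2}$, then $E_{\ell_{1}}\cap E_{\ell_{3}}=1$ for a third line $\ell_{3}$, whence $|E_{\ell_{1}}E_{\ell_{3}}|=m^{4}>m^{3}=|E|$, which is impossible; so the $E_{\ell}$ are pairwise distinct. Thus $\O:=\{\,\PG(E_{\ell}) : \ell\text{ a line of }L\,\}$ is a dual pseudo-oval in $\PG(3n-1,p)$. Finally I would verify $L\cong L(\O)$: embed $\PG(V)$ as the hyperplane at infinity of $\PG(3n,p)$ with affine point set $V$; circles of $L$ correspond to affine points via $C\mapsto g_{C}$ (where $g_{C}(C_{0})=C$), lines of $L$ to the elements $\PG(E_{\ell})$ of $\O$, and the points of $L$ on a line $\ell$ to the cosets of $E_{\ell}$ in $V$ --- the coset attached to $p$ being $\{g_{C}:C\ni p\}$, which is an orbit of $E_{\ell}$ acting by translation, hence a coset. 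Sending the point of $C_{0}$ on $\ell$ to the coset $E_{\ell}$, one checks that $p$ lies on $C$ in $L$ exactly when the associated coset and affine point are incident in $\PG(3n,p)$, so these bijections form an isomorphism.

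The step I expect to be the real obstacle is the middle one: proving $E$ elementary abelian. The transitivity and freeness statements are cheap, and once $E$ is an $\F_{p}$-vector space the remainder is essentially bookkeeping --- turning AX3 into ``any three of the $E_{\ell}$ meet trivially'' and matching up incidences. But commutativity and exponent $p$ of $E$ are genuine geometric facts about Laguerre planes, not formal consequences of regularity on the circle set, and this is the part one would either cite from Steinke or establish through a careful analysis of the derived affine planes.
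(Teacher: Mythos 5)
The paper does not prove this theorem at all --- it is quoted from Steinke \cite{Steinke} (see also L\"owen \cite{Lowen}) --- so there is no internal proof to compare against; your reconstruction does follow the standard route of those papers. The easy direction is fine: translations of $\PG(3n,q)\setminus H$ fix each element of $\mathcal{O}$, hence each line of $L(\mathcal{O})$, and act regularly on the affine points, i.e.\ on the circles. The bookkeeping in the converse is also correct \emph{once} the elation group $E$ is known to be an elementary abelian $p$-group: the identities $E_p=E_{p'}$ for $p,p'$ on a common line, the count $|E_\ell|=m^2$, the trivial triple intersections via AX3 and sharp transitivity, and the coset description of the points all go through as you describe.

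The genuine gap is exactly the step you flag: proving that $E$ is elementary abelian. This is not a formality --- it is the entire content of Steinke's theorem, and nothing downstream survives without it. If $E$ is not abelian, the stabilisers $E_p$ for $p$ ranging over a line are merely conjugate, so $E_\ell$ is not defined; if $E$ is not elementary abelian there is no $\F_p$-vector space structure on $E$, hence no $\PG(3n-1,p)$ in which to locate $\mathcal{O}$, and the subgroups $E_\ell$ need not be subspaces even if they exist. Note also that your derivation never invokes AX4, whereas it must enter somewhere: sharp transitivity on circles together with AX1--AX3 alone is not known to force commutativity (compare the situation for elation generalized quadrangles, where elation groups need not be abelian). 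The actual argument runs through the derived affine planes: $E_p$ induces on the plane derived at $p$ a collineation group of order $m^2$ fixing one parallel class of lines and acting regularly on the remaining $m^2$ lines, and one must show this forces $E_p$ to act as the translation group of a translation plane, then globalise over all points and lines; this is a genuine piece of work, not an application of Andr\'e's theory off the shelf. As written, your proposal proves the easy implication and reduces the hard one to the key lemma without establishing it, so it is not a complete proof --- it is, in effect, a (correct) proof modulo the citation the paper itself makes.
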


In \cite{Thas}, Thas extends the construction of $T_2(O)$, where $O$ is an oval of $\PG(2,q)$ to a construction of a GQ $T(\O)$ of order $q^n$ from a pseudo-oval $\O$. The obtained GQ is a translation generalised quadrangle (TGQ).

Casse, Thas and Wild proved the following.
\begin{theorem}Ê\cite{Casse} \label{casse} If one affine plane arising from a (dual) pseudo-oval $\O$ in $\PG(3n-1,q)$, $q$ odd, is Desarguesian, then $T(\mathcal{O})$ is isomorphic to $T_2(O)$, where $O$ is an oval of $\PG(2,q^n)$, i.e., $\O$ is a pseudo-conic.
\end{theorem}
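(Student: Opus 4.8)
The plan is to route the argument through the Laguerre plane $L(\O)$. If $\O$ is given as a pseudo-oval, I would first replace it by its dual, which is again a pseudo-oval of $\PG(3n-1,q)$, so that we may assume $\O$ is a dual pseudo-oval and work with $L(\O)$, of order $q^n$. The argument then has two parts: deduce that $L(\O)$ is the classical Laguerre plane, and then show that, up to equivalence, only the pseudo-conic produces the classical plane.

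For the first part: by Steinke's theorem \cite{Steinke}, $L(\O)$ is an elation Laguerre plane; its order $q^n$ is odd, and by hypothesis one of its derived affine planes is Desarguesian, so Chen's theorem \cite{Chen} shows $L(\O)$ is Miquelian, hence by van der Waerden--Smid \cite{vsw} it is classical, $L(\O)\cong L(q^n)$, the Laguerre plane of a quadratic cone in $\PG(3,q^n)$. I would then note that the pseudo-conic realises $L(q^n)$: if $\O_{0}$ is the dual pseudo-oval obtained by field reduction from a (dual) conic of $\PG(2,q^n)$, then carrying the whole model of $L(\O_{0})$ through field reduction turns it, by a routine check, into the quadratic-cone model of $L(q^n)$, so $L(\O_{0})\cong L(q^n)\cong L(\O)$.

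It then remains to promote the isomorphism $L(\O)\cong L(\O_{0})$ to a projective equivalence of the dual pseudo-ovals $\O$ and $\O_{0}$ in $\PG(3n-1,q)$; this gives that $\O$ is a (dual) pseudo-conic, equivalently $T(\O)\cong T(\O_{0})=T_2(O)$ with $O$ a conic (any oval of $\PG(2,q^n)$ being a conic since $q^n$ is odd). For this I would use the correspondence between elation Laguerre planes and dual pseudo-ovals in a sharp form: from the incidence geometry $L(\O)$ one recovers the elation group up to conjugacy, and its action on the translation geometry reconstructs the $(2n-1)$-spaces of $\O$ up to an $\F_q$-semilinear collineation of $\PG(3n-1,q)$; performing the same reconstruction starting from $L(\O_{0})$ returns $\O_{0}$, so an isomorphism $L(\O)\cong L(\O_{0})$ carries $\O$ onto $\O_{0}$. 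Alternatively one may stay on the GQ side: the point-residue net of the TGQ $T(\O)$ at its base point is coordinatised by $L(\O)$, so $L(\O)\cong L(\O_{0})$ already forces $T(\O)\cong T(\O_{0})$, and Thas's characterisation \cite{Thas} of the pseudo-ovals yielding $T_2$ of an oval finishes the argument.

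I expect this last step to be the main obstacle. Reducing to ``$L(\O)$ is classical'' is bookkeeping once the quoted theorems are in hand, and checking that the pseudo-conic produces $L(q^n)$ is a routine field-reduction computation; but showing that the Laguerre plane (or the TGQ) remembers the full $\F_q$-linear geometry of the spread $\O$, and not merely its combinatorics, requires the structure theory of elation Laguerre planes / translation generalised quadrangles, i.e. the explicit reconstruction of the elation group and its action. In particular one must rule out that some inequivalent family of $(2n-1)$-spaces in $\PG(3n-1,q)$ reproduces the same incidence structure, and this cannot be done by an elementary counting argument alone.
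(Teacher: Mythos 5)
Your overall strategy coincides with the paper's: the paper derives this theorem as the special case $s=q^n+1$ of its main result (Corollary \ref{cor}), whose proof likewise runs ``show the associated incidence structure is a Miquelian Laguerre plane, then reconstruct the dual pseudo-oval group-theoretically.'' Your first half is fine and is in fact a legitimate shortcut: since a full (dual) pseudo-oval already yields a genuine Laguerre plane $L(\O)$ satisfying all four axioms, Chen--Kaerlein plus van der Waerden--Smid give classicality directly, whereas the paper must route through Steinke's Laguerre near-plane extension theorem and Lemma \ref{essential} because it works with incomplete pseudo-arcs.

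The genuine gap is in your second half, and it is exactly where you suspect it is. Your sentence ``one recovers the elation group up to conjugacy, and its action \ldots reconstructs the $(2n-1)$-spaces of $\O$ up to an $\F_q$-semilinear collineation'' conceals the real difficulty: the elation group $E$ (the fixed-circle-free part of the line-wise stabiliser, plus the identity) is an elementary abelian $p$-group of order $q^{3n}$, so as an abstract group it carries only a canonical $\F_p$-vector space structure, and the point stabilisers $E_P$ are a priori only $\F_p$-subspaces. The classical Laguerre plane of order $q^n$ is simultaneously $L(\O')$ for dual pseudo-ovals $\O'$ living over $\F_{q^n}$, over $\F_q$, and over $\F_p$, so the incidence structure alone does not single out the $\F_q$-geometry; you must exhibit a \emph{canonical} source of the $\F_q^*$-scalar action inside the automorphism group. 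The paper does this in Lemmas \ref{1lemma}--\ref{5lemma}: the line-wise stabiliser $\bar{S}'$ of the Miquelian plane has order $q^{3n}(q^n-1)$, contains $E$ as its unique Sylow $p$-subgroup, and (this is the key point, Lemma \ref{5lemma}) has a \emph{unique} subgroup $S$ of order $q^{3n}(q-1)$, which is moreover contained in the line-wise stabiliser of the original structure $\G$; conjugation by $S$ on $E$ realises multiplication by $\F_q^*$ (Lemma \ref{1lemma}), turning $E$ canonically into $\V(3n,q)$ and the stabilisers $T_{P_i}$ into $\F_q$-subspaces of dimension $2n$. Only then does the identification of $\{T_{P_1},\ldots,T_{P_{q^n+1}}\}$ with a dual pseudo-conic in $H=\PG(3n-1,q)$ go through. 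Without an argument of this kind (uniqueness of $S$, normalisation of the $T_{P_i}$ by $S$), your final step ``an isomorphism $L(\O)\cong L(\O_0)$ carries $\O$ onto $\O_0$'' is not justified, and the alternative you offer via the TGQ side merely relocates the same issue.
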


In the same way as Thas extended the construction for $T_2(O)$ to $T(\O)$, we can extend the construction of Theorem \ref{triviaal} to pseudo-arcs of size $q^n$ in order to obtain a GQ$(q^n-1,q^n+1)$. 

Let $\K$ be a pseudo-arc of size $q^n$ in $\PG(3n-1,q)$. Project the elements of $\K$ from an element $K_1$ of $\K$ onto a $(2n-1)$-space $\pi$, disjoint from $K_1$. The $q^n-1$ $(n-1)$-spaces in $\pi$ obtained in this way form a partial spread, which can be extended uniquely to a spread of $\PG(3n-1,q)$ by adding two $(n-1)$-spaces $\mu_1$, $\mu_2$ (see \cite{Beutelspacher}). This implies that the element $K_1$ lies on $2$ unique tangent $(2n-1)$-spaces to $\K$. It also shows that an $n$-space through an element $K_1$ of $\K$ either lies in a unique tangent space or meets exactly one element of $\K$, different from $K_1$ (necessarily in exactly one point).

\begin{theorem}\label{triviaal2} Let $\K$ be a pseudo-arc of size $q^n$ in $\PG(3n-1,q)$. Embed $H=\PG(3n-1,q)$ in $\PG(3n,q)$.
Let $(\P,\L,\I)$, where incidence is natural, be the following incidence structure.

\begin{itemize}
\item[$\P$:]  the $n$-spaces, not in $H$, through an element of $\K$.
\item[$\L$:] (1) the affine points of $\PG(3n,q)$\\
(2) the $2n$-spaces, not in $H$, through a tangent $(2n-1)$-space in $H$ to $\K$.
\end{itemize}
Then $(\P,\L,\I)$ is a GQ of order $(q^n-1,q^n+1)$. 
\end{theorem}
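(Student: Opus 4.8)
The plan is to verify the generalised quadrangle axioms directly, imitating the proof of Theorem \ref{triviaal} but working with the appropriate subspaces instead of points and lines. Recall that a GQ of order $(s,t)$ is a point-line incidence structure in which every line has $s+1$ points, every point lies on $t+1$ lines, two points lie on at most one line, and for every point $P$ not on a line $L$ there is a unique point on $L$ collinear with $P$. Here $s+1=q^n$ and $t+1=q^n+2$, so we must show: each line of $\L$ contains exactly $q^n$ points of $\P$, each point of $\P$ lies on exactly $q^n+2$ lines of $\L$, and the GQ axiom on a point and a non-incident line.

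First I would count incidences at a point $P\in\P$, say $P$ is an $n$-space through the element $K_i\in\K$. Using the fact recalled just before the statement --- that each element $K_i$ lies on exactly two tangent $(2n-1)$-spaces, coming from the two $(n-1)$-spaces $\mu_1,\mu_2$ completing the projected partial spread to a spread --- an $n$-space $P$ through $K_i$ either meets a unique further element of $\K$ in one point, or lies in a unique tangent $(2n-1)$-space. If $P$ lies in a tangent $(2n-1)$-space, it lies in exactly one, hence in $q^n+1$ affine $2n$-spaces through that tangent space plus... more carefully: $P$ is an $n$-space in $H$ (wait, $P$ is not in $H$), so I should count the affine points on $P$ and the $2n$-spaces of type (2) through $P$. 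Since $P\not\subseteq H$, $P$ meets $H$ in an $(n-1)$-space, which must be $K_i$ (as $P\supseteq K_i$ and $\dim P=n$), so $P$ has exactly $q^n$ affine points, contributing $q^n$ lines of type (1). For type (2): a $2n$-space through a tangent $(2n-1)$-space $\tau$ of $H$ contains $P$ iff it contains $\langle P,\tau\rangle\cap H$... I would show $P$ lies in exactly two such $2n$-spaces, one through each of the two tangent $(2n-1)$-spaces at $K_i$, giving $q^n+2$ lines through $P$ in total. Then for a line $L$ of type (1), an affine point $x$: the $n$-spaces through an element of $\K$ containing $x$ correspond to the elements of $\K$ — each $K_j$ together with $x$ spans an $n$-space — so $L$ contains exactly $q^n$ points. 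For a line $L$ of type (2), a $2n$-space through a tangent $(2n-1)$-space $\tau$: I would count the $n$-spaces through elements of $\K$ lying inside $L$; since $\tau$ meets each element of $\K$ and contains none (being tangent, disjoint from $\K$'s elements except meeting structure), one checks there are exactly $q^n$ such $n$-spaces in $L$.

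The heart of the argument, and the step I expect to require the most care, is the GQ axiom: given $P\in\P$ and $L\in\L$ with $P$ not incident with $L$, there is a unique point of $\P$ on $L$ collinear with $P$. This splits into four cases by the types of $P$'s line and $L$, exactly paralleling the four paragraphs of the proof of Theorem \ref{triviaal}, but now one must track dimensions of spans and intersections in $\PG(3n,q)$ rather than $\PG(3,q)$. The key geometric facts to invoke are: (i) two elements of $\K$ span a $(2n-1)$-space and any third element is disjoint from it — so through a point and an element-line there is a well-defined "second element" of $\K$ when the relevant span is a secant $(3n-1)$-space; (ii) the two tangent $(2n-1)$-spaces at each $K_i$ behave like the two tangent lines at a point of a $q$-arc; (iii) inside a $2n$-space of type (2) through a tangent space $\tau$, the $n$-spaces-through-elements-of-$\K$ form a dual structure isomorphic to an affine plane's worth of incidences. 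For instance, if $P$'s underlying element is $K_i$ and $L$ is the affine $2n$-space through a tangent space $\tau$: if $K_i$ is not met by $\tau$ in the "right" way the span $\langle P, L\rangle\cap H$ determines a second element $K_j$ of $\K$ and the $n$-space $\langle x, K_j\rangle$ for the appropriate affine $x\in L$ is the unique collinear point; in the degenerate case where $\tau$ is one of the two tangent spaces at $K_i$, one uses the other tangent space at $K_i$ to produce the unique $n$-space. In each case uniqueness follows from the pseudo-arc property forcing the relevant intersection to have exactly the expected dimension.

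Once all four cases are dispatched, the axioms are verified with parameters $(s,t)=(q^n-1,q^n+1)$, completing the proof; no non-degeneracy issue arises since $q^n\geq 2$. I would remark that the computations are the natural field-reduction analogue of those in Theorem \ref{triviaal} — replacing "point" by "$(n-1)$-space element of the pseudo-arc", "line not in $H$" by "$n$-space not in $H$", "plane" by "$2n$-space", and "tangent line" by "tangent $(2n-1)$-space" — so that setting $n=1$ recovers Theorem \ref{triviaal} verbatim.
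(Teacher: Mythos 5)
Your proposal is correct and follows essentially the same route as the paper: verify the parameters $(q^n-1,q^n+1)$ by the incidence counts, then check the GQ axiom by cases on the line type, using the span $\langle P,L\rangle\cap H$ for affine-point lines and the two tangent $(2n-1)$-spaces at each element of $\K$ for the tangent-space lines. The only slip is your parenthetical claim that a tangent $(2n-1)$-space ``meets each element of $\K$'' --- it contains exactly one element and is disjoint from all the others, which is in fact what makes your count of $q^n$ points on a type-(2) line work.
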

\begin{proof} It is clear that every line contains $q^n$ points and every point lies on $q^n+2$ lines. Let $P$ be a point of $\P$ and let $L$ be a line of $\L$ of the first type, not through $P$. Then $P$ corresponds to an $n$-space through an element $K_1$ of $\K$ and $L$ corresponds to an affine point, not on $P$. The $(n+1)$-space $\langle P,L\rangle$ meets $H$ in an $n$-space $\mu$ through $K_1$. If $\mu$ is contained in a tangent space $\rho$ (which is necessarily unique), then $\langle L,\rho\rangle$ is the unique line through $P$ intersecting $L$. If $\mu$ is not contained in a tangent space, it meets a unique element of $\K$, say $K_2$ in a point $Q$. The point $\langle L,K_2\rangle$ is the unique point on $L$ collinear with $P$. The points $P$ and $\langle L,K_2\rangle$ are collinear because these are two $n$-spaces, contained in the $2n$-space $\langle L,K_1,K_2\rangle$, hence they meet in an affine point.

Let $P$ be a point of $\P$ and let $L$ be a line of $\L$ of the second type, not through $P$. Then $P$ corresponds to an $n$-space through an element $K_1$ of $\K$ and $L$ corresponds to a $2n$-space through a tangent space $\mu$ at an element $K_2$ of $\K$. 

Suppose first that $K_1\neq K_2$. The $n$-space $P$ and $2n$-space $L$ in $\PG(3n,q)$ meet in a unique point since an $n$-space and a $2n$-space always meet, and if they would have a line in common, $K_1$ would meet the tangent space through $K_2$, which forces $K_1=K_2$, a contradiction. If $K_1=K_2$, then let $\mu_2$ be the second tangent space to $\K$ at $K_1=K_2$ and let $M$ be the $2n$-space spanned by $P$ and $\mu_2$, then $M$ and $L$ meet in an $n$-space through $K_1=K_2$, which is the unique point of $\P$ collinear with $P$ and lying on $L$. 
\end{proof}

\begin{remark} As in Theorem \ref{triviaal}, the Payne derivation of the quadrangle $T(\O)$, where $\O$ is a pseudo-oval, at an element $K_1$ of $\O$ can be seen as arising from the construction of Theorem \ref{triviaal2}, when we start with the pseudo-arc $\O\setminus{K_1}$.
\end{remark}

In the light of the previous remark, it makes sense to investigate whether complete pseudo-arcs of size $q^n$ in $\PG(3n-1,q)$ exist, where, as for arcs, a pseudo-arc is {\em complete} if it is not contained in a larger pseudo-arc. If so, new GQ$(q^n-1,q^n+1)$ arise. If not, all GQ obtained from pseudo-arcs of size $q^n$ in $\PG(3n-1,q)$ as in Theorem \ref{triviaal2} are Payne derivations of GQ obtained from pseudo-ovals.

\section{Orthogonal arrays}
An {\em orthogonal array} of {\em strength t}, on {\em s levels}, {\em k constraints}, and {\em index $\lambda$} is a $k \times N$ array (where $N=\lambda s^t$) with entries from a set of $s\geq 2$ symbols, having the property that in every $t \times N$ submatrix, every $t\times 1$ column vector appears exactly $\lambda$ times.

Labelling the lines of a Laguerre plane from $1$ to $n+1$, the points on each line from $1$ to $n$, and identifying each circle with the $(n+1)$-tuple $(c_1,\ldots, c_{n+1})$ where $c_i$ is the unique point of the circle on line $i$, we see that a Laguerre plane of order $n$ corresponds to an orthogonal array of strength $3$ on $n$ levels, $n+1$ constraints and index $1$ (and conversely).

A {\em Laguerre near-plane} of order $n\geq 3$ is an incidence structure of $n^2$ points, $n$ lines and $n^3$ circles satisfying the axioms (AX1), (AX2), (AX3) from above. A Laguerre near-plane of order $n$ corresponds to an orthogonal array of strength $3$ on $n$ symbols, $n$ constraints and index $1$.

In \cite{Steinke2}, the following theorem is shown.
\begin{theorem}\label{unique}
A finite Laguerre near-plane of odd order $n\geq 7$ that admits a point whose internal incidence structure extends to a Desarguesian plane can be uniquely extended to the Miquelian Laguerre plane of order $n$ by adjoining the points of one line.
\end{theorem}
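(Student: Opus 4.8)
The plan is to exploit the Desarguesian hypothesis to coordinatise the near-plane, read off the only possible way to restore the missing line and complete the circles, and then identify the completed object as the classical Laguerre plane $L(q)$, the uniqueness statement being forced by the rigidity of conics. So, first: if the internal incidence structure at $P$ extends to a Desarguesian affine plane, then $n$ is a prime power $q$ and one can identify the points of the near-plane off the line $\ell$ through $P$ with the points of $\AG(2,q)$ lying off a fixed line $\bar m$, in such a way that the circles through $P$ are exactly the lines of $\AG(2,q)$ meeting $\bar m$ (each with its point of $\bar m$ removed), and the $q-1$ lines of the near-plane distinct from $\ell$ are the other lines in the parallel class of $\bar m$. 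Consequently the $q$ points to be adjoined must be (copies of) the points of $\bar m$, the new line is $\bar m$, and each circle through $P$ can be completed in only one way, by restoring the point where the corresponding line meets $\bar m$.

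The substance of the argument is the treatment of the circles $C$ not through $P$. Such a $C$ meets $\ell$ in a single point $\ne P$, so its remaining $q-1$ points form a set $\Gamma_C\subseteq\AG(2,q)\setminus\bar m$ which is a $(q-1)$-arc: two distinct circles of a near-plane meet in at most two pairwise non-collinear points and the circles through $P$ are lines, so no line carries three points of $\Gamma_C$. Now $\Gamma_C$ meets each of the $q-1$ lines $\{u=u_0\}$, $u_0\neq0$, of the parallel class of $\bar m$ exactly once and meets both $\bar m$ and the line at infinity not at all, so the common point $I_\infty$ of these lines lies on no secant of $\Gamma_C$; hence $\Gamma_C\cup\{I_\infty\}$ is a $q$-arc, and by Segre's theorem (Theorem~\ref{incompleteq}) it lies on a conic $\K_C$, unique since $q\geq5$. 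As the $q-1$ lines through $I_\infty$ meeting $\Gamma_C$ are all secant to $\K_C$, the tangent to $\K_C$ at $I_\infty$ is $\bar m$ or the line at infinity. I would then show that the conics $\K_C$ behave uniformly — either all are parabolas relative to $\bar m$, or all are hyperbolas having $\bar m$ as an asymptote; the two cases are interchanged by the coordinate swap $(x{:}y{:}z)\mapsto(z{:}y{:}x)$ and both lead to the classical plane, so the real content is to exclude a mixed family. In either uniform case $\K_C$ meets the new line $\bar m$ (respectively the line at infinity) in $I_\infty$ together with exactly one further point $\mu_C$, and $C$ is completed, once more in the only possible way, by adjoining $\mu_C$.

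In the completed structure $\L$ the circles through $P$ are the lines and the circles not through $P$ the parabolas of $\AG(2,q)$ (each rounded off by one point of $\ell$ and one of $\bar m$, after the coordinate change if necessary), which is exactly $L(q)$; hence $\L$ is Miquelian, and classical by van der Waerden--Smid. Equivalently, the derived affine plane of $\L$ at $P$ is Desarguesian, so the quoted theorem of Chen applies directly. For uniqueness, note that in any completion to a Laguerre plane each circle $C\not\ni P$ becomes, in the derived affine plane at $P$, a conic through the $q-1$ points of $\Gamma_C$ already present; since $q-1\geq5$, at most one conic passes through these points, so every choice made in the construction was forced and the completion is unique.

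The main obstacle is the homogeneity step in the second paragraph: the arc argument only locates each $\Gamma_C$ on some conic through $I_\infty$, and one must still rule out that the family $\{\K_C\}$ is mixed. I expect this to follow from a double count of the incidences between the circles not through $P$ and the circles through $P$ — i.e.\ between the conics $\K_C$ and the lines of $\AG(2,q)$ — exploiting that a parabola is tangent to a line of every slope whereas a hyperbola of the relevant shape is tangent to lines of only $(q+1)/2$ slopes, together with AX3 applied to triples spanning different circles. It is precisely here that "$q$ odd" and "$q\geq7$" are used.
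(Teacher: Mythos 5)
First, a point of reference: the paper does not prove this statement at all --- it is Steinke's theorem, quoted from \cite{Steinke2} and used as a black box in the proof of Theorem \ref{main1}. So there is no internal proof to compare against; your attempt has to stand on its own. Its skeleton is reasonable and is essentially the natural one: the coordinatisation of the derived structure as $\AG(2,q)$ minus the points of an affine line $\bar m$ is forced (the $q-1$ long lines are pairwise disjoint, hence a parallel class minus one line, and that missing line is $\bar m$), each circle $C\not\ni P$ does give a $(q-1)$-arc $\Gamma_C$ completing to a $q$-arc $\Gamma_C\cup\{I_\infty\}$ and hence, by Segre, to a unique conic $\K_C$, and the tangent at $I_\infty$ is one of the two lines $\bar m$, $\ell_\infty$.

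The genuine gap is exactly where you flag it: the claim that the family $\{\K_C\}$ is not mixed is asserted, not proved, and it is the entire mathematical content of the theorem. Nothing local forces uniformity --- two conics of different types through $R$, $S$, $I_\infty$ meet in exactly three rational points, which merely means they are tangent at $R$ or $S$, and that is perfectly consistent with the near-plane axioms for a single pair. A global counting argument is unavoidable, and the one you sketch (comparing the number of tangent directions of a parabola versus a hyperbola) does not obviously close: you would need to turn it into a contradiction with a specific incidence count, and you give no such count. It is telling that the paper's own Lemma \ref{essential} is precisely this uniformity statement (in the more general setting of $s$-arcs with $q^n-s+2$ common tangents; your situation is the case $s=q^n$ with two tangents), and its proof is not a slope count but a delicate argument: fixing two points $P_0,P_1$ on a conic tangent to $\ell_0$, computing the third root $y_2=(-\rho_i\bar f+f_i)/\bar d y_0y_1$ of the resultant cubic of two conics, and showing that if even one conic of the family through $P_0,P_1$ were tangent to the other line then at least $s-3$ would be, which together with the at least $s-4$ tangent to $\ell_0$ exceeds the total number $q^n-1$ of conics of the family through $P_0,P_1$ (using $s>(q^n+7)/2$). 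Until you supply an argument of this strength, the proof is incomplete; the remaining steps (identification of the uniform completion with $L(q)$ via van der Waerden--Smid or Chen--Kaerlein, and uniqueness from the fact that a conic is determined by any five of its points) are routine by comparison.
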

This theorem also shows that the orthogonal array of strength $3$ on $n$ symbols, $n$ constraints and index $1$ that arises from a Laguerre near-plane admitting a point whose internal incidence structure extends to a Desarguesian plane, is uniquely extendable to an orthogonal array of strength $3$ on $n$ symbols, $n+1$ constraints and index $1$.

\section{Results for small orders}

From Theorem \ref{triviaal2}, we know that, starting from a pseudo-arc of size $4$ in $\PG(5,2)$, we obtain a GQ of order $(3,5)$. But any GQ of order $(3,5)$ is isomorphic to the $T_2^\ast(O)$ arising from a hyperoval in $\PG(2, 4)$ \cite{Dixmier}. This implies that a pseudo-arc of size $4$ in $\PG(5,2)$ lies in a pseudo-oval.

With the use of MAGMA \cite{Magma}, we were able to show that a pseudo-arc of size $8$ in $\PG(8,2)$ lies in a pseudo-oval. Probably, the methods developed in \cite{Steinke3} to show that finite Laguerre planes of order $8$ are ovoidal can be used to show this result by hand.

Furthermore, by computer, we showed that a pseudo-arc of size $9$ of $\PG(5,3)$ lies in a pseudo-oval.

\section{The proof of the main theorem}

\begin{lemma} \label{essential} Let $s>m_2'(q^n)$. Let $S=\{A_1,\ldots,A_{q^{3n}-q^{2n}}\}$ be a set of $q^{3n}-q^{2n}$ $s$-arcs through $(1,0,0)$ in $\PG(2,q^n)$, such that the $q^n-s+2$ tangent lines at the point $(1,0,0)$ to the $s$-arcs in $S$ coincide and such that two $s$-arcs of $S$ meet in at most $2$ points, different from $(1,0,0)$.

Let $\bar{S}$ be the set $\{C_1,\ldots,C_{q^{3n}-q^{2n}}\}$, where $C_i$ is the unique conic containing $A_i$.  Then all conics $C_i$ are tangent to the same line at $(1,0,0)$.
\end{lemma}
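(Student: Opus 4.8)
The plan is to use a counting/incidence argument together with Segre's completion theorem (Theorem \ref{incompleteq}) to pin down the tangent line of each conic $C_i$. First I would observe that since each $A_i$ is an $s$-arc with $s > m_2'(q^n)$, and $A_i$ passes through $(1,0,0)$, the arc $A_i$ is \emph{not} complete (any complete arc has size at most $m_2'(q^n)$ or equals $q^n+1$; if $|A_i| = q^n+1$ it is already a conic and there is nothing to prove for that index, so assume $s \le q^n$). By Theorem \ref{incompleteq} each $s$-arc $A_i$ with $s$ large extends uniquely to a conic; call it $C_i$, and this is exactly $\bar S$. The key point is that the tangent line to $C_i$ at the point $(1,0,0) \in A_i$ is forced by $A_i$ itself: among the $q^n+1$ lines through $(1,0,0)$, exactly $s-1$ are secants of $A_i$ (hitting $A_i$ again), leaving $q^n+1-(s-1) = q^n - s + 2$ lines that are tangent to $A_i$ at $(1,0,0)$. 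Since all $A_i$ share the \emph{same} $q^n-s+2$ tangent lines at $(1,0,0)$ by hypothesis, they also share the same $s-1$ secant directions at $(1,0,0)$.

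Next I would show that the unique tangent line to the conic $C_i$ at $(1,0,0)$ must be one of these $q^n-s+2$ common lines — indeed, a tangent line to a conic at a point $P$ meets the conic only in $P$, so it certainly meets $A_i \subseteq C_i$ only in $(1,0,0)$, hence it is one of the tangent lines of $A_i$ at $(1,0,0)$. This already confines the tangent line of $C_i$ to a fixed pencil of $q^n - s + 2$ lines, but not to a single one. To finish, I would exploit the condition that any two arcs $A_i, A_j$ meet in at most two points other than $(1,0,0)$, so any two conics $C_i, C_j$ share at least $s - 2$ of the points of $A_i \cap A_j$... wait, more carefully: $|A_i \cap A_j| \le 3$ counting $(1,0,0)$, while two distinct conics meet in at most $4$ points. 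The sheer number of arcs, $q^{3n} - q^{2n}$, is the lever: I would count incident pairs (conic, point) or set up a double count over the $q^n - s + 2$ possible tangent directions to force a single direction to occur, or alternatively argue via the Desarguesian spread / field-reduction picture that the tangent line is an invariant of the shared local data at $(1,0,0)$.

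The main obstacle I anticipate is precisely the last step: getting from ``the tangent line of $C_i$ lies in a fixed pencil of $q^n-s+2$ lines'' to ``the tangent line of $C_i$ is \emph{the same} line for all $i$.'' The local tangency data at $(1,0,0)$ alone determines only the pencil, not the line, so one genuinely needs the global hypotheses — the large count $q^{3n}-q^{2n}$ of arcs and the pairwise-intersection bound — to be brought to bear, most plausibly by showing that if two conics $C_i, C_j$ had different tangent lines at $(1,0,0)$ then they would be forced to meet in too few points to be compatible with containing arcs that agree in $s-1 > m_2'(q^n)-1$ secant directions through $(1,0,0)$, contradicting that two conics meet in at most $4$ points. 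In other words, I expect the proof to hinge on: two conics tangent to \emph{different} lines at a common point $(1,0,0)$ meet in at most $2$ further points, but the arcs they contain must agree on all $s-1$ secant directions at $(1,0,0)$, and translating that directional agreement into actual common points (via the second-largest-arc bound ruling out one of the intersection multiplicities) yields the contradiction that pins all tangent lines together.
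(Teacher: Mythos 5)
Your opening reductions are fine: each $A_i$ extends to a unique conic $C_i$, and the tangent line to $C_i$ at $(1,0,0)$ meets $A_i$ only there, so it lies in the common pencil of $q^n-s+2$ tangent lines. But that is the easy half, and you correctly flag that the real content is passing from ``the tangent lies in a fixed pencil'' to ``the tangent is a fixed line.'' Your proposal does not actually cross that gap: the closing paragraph offers three alternative strategies (``count incident pairs \ldots or set up a double count \ldots or alternatively argue via the Desarguesian spread picture'') without executing any of them, and the one mechanism you describe in detail is flawed. First, the intersection numerology is backwards: two distinct conics through $(1,0,0)$ with \emph{different} tangents there meet transversally at $(1,0,0)$ and so can share up to $3$ further points; it is conics with the \emph{same} tangent that are restricted to at most $2$ further points. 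Second, and more fundamentally, ``the arcs agree on all $s-1$ secant directions at $(1,0,0)$'' cannot be ``translated into actual common points'': two arcs through $(1,0,0)$ can have identical secant pencils there while the second intersection points on each secant are entirely different, so no contradiction with Bezout is available from directional agreement alone.

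The paper's argument is genuinely more delicate and uses ingredients absent from your sketch. It writes explicit equations for conics tangent to each line $\ell_i$ of the pencil, fixes one conic $C$ tangent to $\ell_0$ and two points $P_0,P_1$ of its arc, and observes (by the triple-count that exactly one arc of $S$ passes through each admissible non-collinear triple) that exactly $q^n-1$ members of $\bar S$ pass through $P_0,P_1$. For any such conic $C'$ tangent to some $\ell_i$, $i\ge 1$, the intersection $C\cap C'$ away from $(1,0,0)$ is governed by a cubic with roots $y_0,y_1$ and, by Vieta, a forced third root $y_2\in\F_{q^n}$; since the arcs in $C$ and $C'$ may share only two points besides $(1,0,0)$, $y_2$ is confined to $q^n-s+3$ values, each pinning down $C'$. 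This bounds the number of conics through $P_0,P_1$ not tangent to $\ell_0$, and the resulting pigeonhole $2s-8>q^n-1$ (using $s>m_2'(q^n)>(q^n+7)/2$) rules out any $C'$ tangent to $\ell_i$, $i\ge 1$. A final chaining argument over overlapping triples $P_0P_1P_2$, $P_1P_2P_3$, $P_2P_3P_4$ propagates the common tangent $\ell_0$ to all of $\bar S$. Without the explicit cubic/Vieta step and the pigeonhole count, your proposal has no working route to the conclusion, so as it stands the proof is incomplete.
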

\begin{proof}
Let the tangent lines be $\ell_0$, with equation $Z=0$, and $\ell_i$ with equation $Y+\rho_iZ=0$, for $i=1,\ldots,q^n-s+1$, where $\rho_1=0$. 

Let $m_1,m_2,m_3$ be three distinct lines through $(1,0,0)$ that are not tangent lines to the $s$-arcs. Since every $s$-arc meets the lines $m_1$, $m_2$, $m_3$ in a point and these points are non-collinear, we have $q^{3n}-q^{2n}$ choices for these intersection points. Since there are exactly $q^{3n}-q^{2n}$ $s$-arcs in the set $S$ and two $s$-arcs meet in at most $2$ points, different from $(1,0,0)$, we get that there is exactly one $s$-arc of $S$ through three non-collinear points, not lying on the lines $\ell_0,\ldots,\ell_{q^n-s+1}$, that are such that no two of them are collinear with $(1,0,0)$. In the same way, we also get that there are $q^n-1$ $s$-arcs of $S$ through two points that do not lie on the lines $\ell_0,\ldots,\ell_{q^n-s+1}$ and are not collinear with $(1,0,0)$.

Every conic tangent to $\ell_0$ at $(1,0,0)$ has equation $XZ+dY^2+eYZ+fZ^2=0$, for some $d,e,f$ in $\F_{q^n}$, and $d\neq 0$. Every conic tangent to $\ell_i$ at $(1,0,0)$ has equation $XY+\rho_iXZ+d_iY^2+e_iYZ+f_iZ^2=0$, where $d_i,e_i,f_i$ are in $\F_{q^n}$, but $f_i\neq e_i\rho_i-d_i\rho_i^2$.

Now, we may assume without loss of generality that there is a conic $C$ of $\bar{S}$ tangent to $\ell_0$, hence, $C$ has equation $XZ+\bar{d}Y^2+\bar{e}YZ+\bar{f}Z^2=0$ for some fixed $\bar{d},\bar{e},\bar{f}$, where $\bar{d}\neq 0$. Every point on $C$, different from $(1,0,0)$, has coordinates $(-\bar{d}y^2-\bar{e}y-\bar{f},y,1)$.

Now let $P_0$ and $P_1$ be two points on $A$, the $s$-arc of $S$ contained in $C$. There are $q^n-1$ conics through $P_0,P_1$ in the set $\bar{S}$. Suppose a conic $C'$ through $P_0$ and $P_1$ in the set $\bar{S}$ is tangent to the line $\ell_i$ for some $i\geq 1$. The intersection points of $C$ and $C'$, different from $(1,0,0)$, are given by the points 

$(-\bar{d}y^2-\bar{e}y-\bar{f},y,1)$, where $y$ is a solution of
$$-\bar{d}y^3-\bar{e}y^2-\bar{f}y-\rho_i\bar{d}y^2-\rho_i\bar{e}y-\rho_i \bar{f}+d_i y^2+e_iy+f_i=0 \ (*),$$ where $d_i,e_i$ and $f_i$ are determined by the conic $C'$. Now $P_0=(-\bar{d}y_0^2-\bar{e}y_0-\bar{f},y_0,1)$ for some $y_0$ and $P_1=(-\bar{d}y_1^2-\bar{e}y_1-\bar{f},y_1,1)$ for some $y_1\neq y_0$. Note that $y_0y_1\neq 0$ since $Y=0$ is a tangent line to the $s$-arc $A$. Since the equation $(*)$ has two (different) solutions $y_0,y_1$ in $\F_{q^n}$, it has a third solution $y_2$ in $\F_{q^n}$, given by $y_2=(-\rho_i\bar{f}+f_i)/\bar{d}y_0 y_1$. If $y_2$ is not equal to a value in $\{y_0,y_1,-\rho_1,-\rho_2,\ldots,-\rho_{q^n-s+1}\}$, then we obtain a contradiction since $A$ and $A'$, where $A'$ is the $s$-arc of $S$ contained in $C'$ can only have two points in common. Expressing that $y_2$ equals one of the values in $\{y_0,y_1,-\rho_1,-\rho_2,\ldots,-\rho_{q^n-s+1}\}$ gives an equation determining $f_i$, which uniquely determines $d_i$ and $e_i$. Hence, there are at most $q^n-s+3$ conics through $P_0$ and $P_1$ in the set $\bar{S}$, that are not tangent to $\ell_0$, hence, at least $s-4$ conics through $P_0$ and $P_1$ in $\bar{S}$ are tangent to $\ell_0$. If there is a conic in $\bar{S}$ through $P_0$ and $P_1$ tangent to $\ell_i$, for some fixed $i\geq 1$, then the same argument yields that there are at least $s-3$ conics through $P_0$ and $P_1$ tangent to $\ell_i$. But then there would be $2s-8>q^n-1$ conics in $S$ through $P_0$ and $P_1$, a contradiction since $s>m_2'(q^n)$ implies that $s>(q^n+7)/2$. This implies that all conics of $\bar{S}$ meeting a conic of $\bar{S}$ tangent to $\ell_0$ in $2$ points, different from $(1,0,0)$, are tangent to $\ell_0$ too. Now let $P_2,P_3,P_4$ be points, not on one of the lines $\ell_j$, $j=0,\ldots,q^n-s+1$, and such that $P_2,P_3,P_4$ are not collinear and no $2$ points of $\{P_0,P_1,P_2,P_3,P_4\}$ are collinear with $(1,0,0)$. Then there is a unique $s$-arc through $P_0,P_1,P_2$ that is by the previous extending to a conic tangent to $\ell_0$. Repeating the same argument for the unique $s$-arcs through $P_1,P_2,P_3$ and $P_2,P_3,P_4$ resp., we obtain that all $s$-arcs extend to conics tangent to $\ell_0$.
\end{proof}

Let us fix some notation. Let $\K=\{K_1,\ldots,K_{s}\}$ be a dual pseudo-arc in $\PG(3n-1,q)$ with $s$ elements, where $s>m'_2(q^n)$. Let $\S_i:=\{K_j\cap K_i\vert j\neq i\}$. It is clear that for a fixed $i$, the set $\S_i$ is a partial $(n-1)$-spread of $K_i$.

Embed $H:=\PG(3n-1,q)$ as a hyperplane in $\PG(3n,q)$. Now define the following incidence structure $\G=(\P,\L,\C)$, where incidence is natural:
\begin{itemize}
\item [$\P$:] the $2n$-spaces through an element of $\K$, not contained in $H$
\item[$\L$:]  the elements of $\K$
\item[$\C$:] the points of $\PG(3n,q)$, not in $H$.
\end{itemize}
With these definitions, $\G$ has $sq^n$ points, $s$ lines and $q^{3n}$ circles where a line contains $q^n$ points, a circle contains $s$ points, and there are $q^{2n}$ circles through one point.

The incidence structure $\G$ satisfies the first $3$ axioms for a Laguerre plane:

\begin{itemize}
\item[AX1] {\em Every point of $\P$ lies on a unique line of $\L$}. This holds since a $2n$-space through an element of $\K$ contains a unique element of $\K$.

\item[AX2] {\em A circle of $\C$ and a line of $\L$ meet in a unique point of $\P$}. This holds since a point of $\PG(3n,q)$, not in $H$ and an element of $\K$ span a unique $2n$-space.

\item[AX3]  {\em Through $3$ points of $\P$, no two on a line of $\L$, there is a unique circle of $\C$}. This holds since three $2n$-spaces in $\PG(3n,q)$ meet in at least one point. If there would be a line contained in the intersection, there would be a point contained in the intersection of $3$ elements of $\K$, a contradiction.
\end{itemize}
Hence, $\G$ is an orthogonal array of strength $3$ on $q^n$ levels, $s$ constraints and index $1$.

\begin{theorem} \label{main1} Let $\G$ be an orthogonal array of strength $3$ on $q^n$ levels, with $s$ constraints and index $1$ obtained from $\K$, where $s>m_2'(q^n)$. If $\S_i$ extends to a Desarguesian spread for some $1\leq i\leq s$, then $\G$ can be uniquely extended to an orthogonal array of strength $3$ on $q^n$ levels with $n+1$ constraints and index $1$, which forms a Miquelian Laguerre plane.  \end{theorem}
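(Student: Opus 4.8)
The plan is to regard $\G$ as a fragment of a Laguerre near-plane, to recognise its internal structure at one point inside $\AG(2,q^n)$ by means of the Desarguesian-spread hypothesis, and then to let Lemma~\ref{essential} and Theorem~\ref{unique} do the work. First I would fix a point $P$ of $\G$ lying on the line $K_i$ for which $\S_i$ extends to a Desarguesian spread, and pass to the derived incidence structure of $\G$ at $P$: its points are the points of $\G$ off $K_i$, and its lines are the $K_j$ with $j\neq i$ together with the circles of $\G$ through $P$. The $s-1$ lines $K_j$ are pairwise ``parallel'' by AX1, and I expect the hypothesis that $\S_i$ extends to a Desarguesian spread to be exactly equivalent to the statement that this derived structure embeds in $\AG(2,q^n)$ --- with line at infinity $\ell_\infty$ --- so that the $K_j$, $j\neq i$, become $s-1$ of the $q^n$ lines through one point $V_\infty\in\ell_\infty$, while the $q^{2n}$ circles of $\G$ through $P$ become all the lines of the remaining $q^n$ parallel classes. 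Establishing this equivalence is the one-point, affine shadow of the field-reduction dictionary already used for Theorem~\ref{casse}.

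Next I would attach an $s$-arc of $\PG(2,q^n)$ to each circle of $\G$ not through $P$. Such a circle meets each $K_j$ ($j\neq i$) in one point and $K_i$ in a point distinct from $P$, so its $s-1$ derived points lie one on each of the $s-1$ lines through $V_\infty$ present in $\G$; since two distinct circles of $\G$ meet in at most two points (by AX2--AX3), these $s-1$ points together with $V_\infty$ form an $s$-arc, whose $q^n-s+2$ tangent lines at $V_\infty$ are $\ell_\infty$ and the $q^n-s+1$ lines through $V_\infty$ not among the $K_j$ (hence do not depend on the circle), and two such arcs meet in at most two points other than $V_\infty$. As there are exactly $q^{3n}-q^{2n}$ such circles, the hypotheses of Lemma~\ref{essential} are met, using that $s>m_2'(q^n)$ forces each of these $s$-arcs onto a unique conic (it is either complete, hence a conic since $q^n$ is odd, or it extends and so reaches a complete arc of size $>m_2'(q^n)$, again a conic by Segre's Theorem~\ref{incompleteq}). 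Lemma~\ref{essential} then tells us that all these conics are tangent to a single line $\ell^\ast$ at $V_\infty$; since such a conic cannot be tangent at $V_\infty$ to any of the $s-1$ present lines $K_j$ (it contains the point of the arc on $K_j$), $\ell^\ast$ is $\ell_\infty$ or one of the absent lines through $V_\infty$, and in both cases each circle of $\G$ not through $P$ is the trace of a conic $\overline{C}$ tangent to $\ell^\ast$ at $V_\infty$ --- a conic that meets every line through $V_\infty$ other than $\ell^\ast$ in exactly one further point.

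Finally I would pass from $\G$ to a Laguerre near-plane $\G'$ of order $q^n$ by adjoining new generators, new points, and the circle--point incidences that the conics $\overline{C}$ dictate, so as to complete the $s-1$ present lines through $V_\infty$ to the $q^n-1$ generators other than $K_i$ of such a near-plane. The crucial check is that $\G'$ satisfies AX1--AX3; here one uses that a conic is determined by any three of its points to see that these incidences are mutually consistent. Since $P$ still has the Desarguesian plane $\AG(2,q^n)$ as its internal structure, Theorem~\ref{unique} yields a unique extension of $\G'$ to the Miquelian Laguerre plane of order $q^n$, and re-reading this through the correspondence between Laguerre planes and orthogonal arrays gives the unique extension of $\G$ to an orthogonal array of strength $3$ on $q^n$ levels, $q^n+1$ constraints and index $1$.

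The orders $q^n<7$ excluded in Theorem~\ref{unique} occur only for $n=1$, where the whole statement is Segre's Theorem~\ref{incompleteq}, and are in any case covered by the computations of Section~6. The two steps I expect to be genuinely delicate are the equivalence between ``$\S_i$ extends to a Desarguesian spread'' and the affine embedding of the first paragraph, and the consistency check for $\G'$ in the third (which also requires choosing correctly which line through $V_\infty$ is to remain a non-generator); once these are in place, Lemma~\ref{essential} and Theorem~\ref{unique} carry the argument.
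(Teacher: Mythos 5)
Your overall strategy coincides with the paper's: derive $\G$ at a point $P$ on the line $K_i$, use the Desarguesian extension of $\S_i$ to realise the derived structure inside a Desarguesian plane so that the circles of $\G$ not through $P$ become $s$-arcs through a common point $(\infty)$ with common tangent lines there, extend each to its unique conic, invoke Lemma~\ref{essential} to obtain a common tangent line $\ell_\infty$ at $(\infty)$, package the result as a Laguerre near-plane of order $q^n$, and finish with Theorem~\ref{unique}. Your first two paragraphs match the paper's proof step for step (the paper realises your ``affine shadow'' concretely as the chain $(\P,\L)\to(\P',\L')\to(\P'',\L'')$ of incidence structures living in the $2n$-space $P$, with $(\infty)=K_1$), and your observation that the common tangent cannot be one of the lines $K_j$ is correct.

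The genuine gap is in your third paragraph. Your near-plane $\G'$ has $K_i$ among its $q^n$ generators, completed by $q^n-1$ of the lines of $\PG(2,q^n)$ through $(\infty)$. This is not the paper's near-plane, and it does not work as stated. First, the count forces you to discard \emph{two} of the $q^n+1$ lines through $(\infty)$ ($\ell_\infty$ and one more), and there is no canonical choice of the second; your own parenthetical about ``which line is to remain a non-generator'' (singular) shows the generator count is off by one. Second, and more seriously, the points of $K_i$ are not points of $\PG(2,q^n)$ --- the derivation at $P$ collapses the whole line $K_i$ to the single point $(\infty)$ --- so your $\G'$ is a hybrid object, and axiom AX3 for a triple consisting of a point $R$ of $K_i$ and two newly adjoined points amounts to showing that no two conics of $S$ arising from circles of $\G$ through the same point of $K_i$ meet in two new points. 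That is exactly the consistency of the one-line extension, i.e.\ the content of what Theorem~\ref{unique} is being invoked to deliver, so assuming it here is circular. The paper's near-plane avoids this entirely: its points are the $q^{2n}$ points of $\PG(2,q^n)$ off $\ell_\infty$, its lines are the $q^n$ lines through $(\infty)$ other than $\ell_\infty$, and its circles are the conics of $S$ together with the lines not through $(\infty)$; AX1--AX3 then follow from a routine count (two circles share at most two admissible points, by B\'ezout together with the common tangency at $(\infty)$, and the number of circles times $\binom{q^n}{3}$ equals the number of admissible triples). The line $K_i$ and its points reappear only at the very end, as the line adjoined by Theorem~\ref{unique}. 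Consequently your justification for applying that theorem --- that ``$P$ still has $\AG(2,q^n)$ as its internal structure'' --- does not survive the correction, since $P$ is not a point of the correct near-plane; the Desarguesian-derivation hypothesis must be checked at a point of $\PG(2,q^n)$ off $\ell_\infty$.
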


\begin{proof}
Let $\S_1$ be the partial spread extending to a Desarguesian spread and fix a $2n$-space $P$ through $K_1$, not contained in $H$. We define a new incidence structure $(\P,\L,\I)$, where incidence is natural and: 
\begin{itemize}

\item[$\P$:]  $2n$-spaces through an element of $\K$, different from $K_1$
\item[$\L$:] (1)  elements of $\K \neq K_1$\\
 (2) the $q^{2n}$ points on $P$, not in $K_1$.
\end{itemize}
Now we claim that a circle of $\G$, not through the point $P$ of $\G$, becomes a set $A$ of $s-1$ points in $(\P,\L)$, such that every line of $\L$ has at most 2 points of $A$. A circle of $\G$, not through $P$, consists of a set $C$ of $s$ different $2n$-spaces through a fixed point $r$, not in $P$ and an element of $\K$. Since $\langle K_1,r\rangle$ is not a point of $(\P,\L)$, a circle corresponds to a set of $s-1$ points in $(\P,\L)$.

 A line of $\L$ of the first type contains a unique point of $A$. If a line of the second type would contain $3$ points of $A$, then the three $2n$-spaces would meet in a point of $P$ and a point, not in $P$, hence, in a line. This is a contradiction since it would imply that there are $3$ elements of $\K$ meeting in a point. This proves our claim.

By intersecting the points of $\P$ and the lines of $\L$ with the $2n$-space $P$, we obtain an incidence structure $(\P',\L')$ that is clearly isomorphic to $(\P,\L)$. To be more precise, the incidence structure $(\P',\L',\I)$ obtained (with natural incidence) has:

\begin{itemize}
\item[$\P':$]  $n$-spaces through an element of $\S_1$ in $P$
\item[$\L':$] (1)  elements of $\S_1$\\
(2) the $q^{2n}$ points on $P$, not in $K_1$.
\end{itemize}
It is clear that we still have the property that a circle of $\G$ corresponds to a set $A'$ of $s-1$ points in $(\P',\L')$ such that every line of $\L'$ has at most $2$ points of $A'$.

Since $\S_1$ is extendable to a Desarguesian spread $\D$, $(\P',\L')$ is embeddable in a Desarguesian projective plane $(\P'',\L'')$ with

\begin{itemize}
\item[$\P'':$] (1) $n$-spaces through the elements of $\D$ in $P$\\ (2) the space $K_1$ (also denoted by $(\infty)$)

\item[$\L'':$] (1) elements of $\D$\\ \ (2) the $q^{2n}$ points on $P$, not in $K_1$.
\end{itemize}
The set of $s-1$ points $A''$ obtained from the points on a circle of $\G$, not through the point $P$, has the property that no line of $(\P'',\L'')$ contains more than $2$ points of $A''$: this property held for all lines of type (2) in $(\P',\L')$, moreover, every line of type (1) that is an element of $\D$ but not of $\K$ contains zero points of $A''$.

The space $K_1$ is the union of all lines of type (1). This point can be added to $A''$ to obtain a set of $s$ points, no three on a line. The lines corresponding to the elements of $\D$, not in $\S_i$ are tangent to the set $A''$ in the point $(\infty)$.

Since $(\P'',\L'')$ is Desarguesian and $s>m_2'(q^n)$, we know that the $s$-arc $A''$ can be uniquely extended to a conic. This holds for all $q^{3n}-q^{2n}$ distinct $s$-arcs obtained from the points on the circles, not through $P$.

Hence, we have a set of $q^{3n}-q^{2n}$ distinct $s$-arcs in $\PG(2,q^n)$, such that the tangent lines at the point $(\infty)$ coincide. Moreover, by (AX3), two $s$-arcs meet in at most $2$ points, different from $(\infty)$. Let $S$ be the set of $q^{3n}-q^{2n}$ conics that are the extensions of the $s$-arcs obtained from the circles of $\G$, not through $P$. By Lemma \ref{essential}, the tangent line at the point $(\infty)$ coincides for every conic of $S$, denote it by $\ell_{\infty}$.

Define the following incidence structure.
\begin{itemize}
\item[$\P$:] points of $\PG(2,q^n)$, not on $\ell_\infty$,
\item[$\L$:] lines through $(\infty)$, different from $\ell_\infty$,
\item[$\C$:] (1) elements of $S$,\\
 (2) lines not through $(\infty)$.
 \end{itemize}
Then $(\P,\L,\C)$ is a Laguerre near-plane $\G'$ of order $q^n$. It clearly has $q^{2n}$ points, $q^n$ lines and $q^{3n}$ circles. Moreover, every point lies on a unique line, a circle and a line meet in a unique point of $\P$ and every $3$ non-collinear points determine a unique circle.
By Theorem \ref{unique}, this Laguerre near-plane can be extended uniquely to a Miquelian Laguerre plane $\mathcal{M}$.
\end{proof}

The results stated in the following lemma are well-known, but we give a proof for completeness. We denote the group of elations of $\PG(3n,q)$ with axis $H:X=0$ by $G_{el}$ and the group of perspectivities of $\PG(3n,q)$ with axis $H$ by $G_{per}$. Note that $|G_{el}|=q^{3n}$ and $|G_{per}|=q^{3n}(q-1)$.
\begin{lemma}  \label{1lemma} The following statements hold:
\begin{itemize}
\item $G_{el}$ can be identified with the additive group of a $3n$-dimensional vector space $V$ over $\F_q$.
\item $G_{el}$ is a normal subgroup of $G_{per}$.
\item Under this identification, a subgroup of $G_{el}$ forms a vector subspace of $V$ if and only if it is normalised by $G_{per}$.
\end{itemize}
\end{lemma}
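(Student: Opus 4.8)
The plan is to coordinatize $\PG(3n,q)$ so that the hyperplane $H$ has equation $X_0=0$ (writing a point as $(x_0,x_1,\ldots,x_{3n})$ with the first coordinate distinguished), and to make explicit the matrix form of an elation and of a general perspectivity with axis $H$. An elation with axis $H$ fixes $H$ pointwise and fixes one further point not on $H$ (its centre lies on $H$), so in these coordinates it is the map $(x_0,\mathbf{x})\mapsto(x_0,\mathbf{x}+x_0\mathbf{v})$ for a fixed $\mathbf{v}\in\F_q^{3n}$; composition of two such maps adds the corresponding vectors. This gives the first bullet: $\tau_{\mathbf v}\mapsto\mathbf v$ is a group isomorphism from $G_{el}$ onto $(V,+)$ with $V=\F_q^{3n}$, and $|G_{el}|=q^{3n}$.

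For the second bullet, I would write a general perspectivity $g\in G_{per}$ with axis $H$ in matrix form: it fixes $H$ pointwise and has some centre, so up to the scalar it acts as $(x_0,\mathbf x)\mapsto(\lambda x_0,\;\mathbf x + x_0\mathbf w)$ for some $\lambda\in\F_q^\ast$ and $\mathbf w\in\F_q^{3n}$ (the homologies correspond to $\lambda\neq 1$, the elations to $\lambda=1$, $\mathbf w$ arbitrary; this also recovers $|G_{per}|=q^{3n}(q-1)$). A direct computation of $g\tau_{\mathbf v}g^{-1}$ then shows it equals $\tau_{\lambda^{-1}\mathbf v}$ (or $\tau_{\lambda\mathbf v}$ depending on the convention); in particular conjugation by $g$ sends $G_{el}$ into $G_{el}$, so $G_{el}\trianglelefteq G_{per}$, and under the identification of the first bullet the conjugation action of $G_{per}$ on $V$ is by the scalar maps $\mathbf v\mapsto\lambda^{-1}\mathbf v$ together with the (trivial, since $G_{el}$ is abelian) inner action.

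For the third bullet I would argue both directions. If $W\le G_{el}$ is an $\F_q$-subspace of $V$, then since the conjugation action of $G_{per}$ on $V$ is by $\F_q$-scalar multiplications (by the computation above), $W$ is invariant under all scalars, hence normalised by $G_{per}$. Conversely, suppose $W\le G_{el}$ is a subgroup normalised by $G_{per}$; it is automatically a subgroup of $(V,+)$, i.e.\ an $\F_p$-subspace, and being invariant under conjugation means it is invariant under multiplication by every $\lambda\in\F_q^\ast$; together with closure under addition this says exactly that $W$ is an $\F_q$-subspace. The one point that needs care is verifying that the scalars $\lambda$ occurring as the "$x_0$-multiplier" of elements of $G_{per}$ really range over all of $\F_q^\ast$ — that is, that the homology group acts on $V$ by the full multiplicative group $\F_q^\ast$ — which is clear from the matrix description since $\lambda$ is a free parameter. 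I expect the main (mild) obstacle to be purely bookkeeping: choosing the conjugation convention consistently so that the exponents/scalars come out right, and being careful that "$\F_p$-subspace closed under $\F_q^\ast$" is genuinely equivalent to "$\F_q$-subspace", which holds because $\F_q=\F_p[\F_q^\ast]$ as a ring.
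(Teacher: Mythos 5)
Your proposal is correct and follows essentially the same route as the paper: coordinatize so that elations and perspectivities with axis $H$ have an explicit matrix form, verify that composition of elations corresponds to vector addition, and compute that conjugation by a perspectivity acts on $G_{el}$ as scalar multiplication by $\lambda\in\F_q^\ast$, from which both normality and the subspace characterisation follow. The only cosmetic difference is that the paper first gives a separate fixed-point argument for normality before the matrix computation, whereas you extract normality directly from the conjugation formula; your extra remark that an additive subgroup closed under all $\F_q^\ast$-scalars is an $\F_q$-subspace makes explicit a point the paper leaves implicit.
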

\begin{proof} Let $\phi_v$, where $v$ is a vector of $\V(3n,q)$, denote the unique elation of $\PG(3n,q)$ with axis $X=0$ that maps the point $(1,0,\ldots,0)$ of $\PG(3n,q)$ to the point with coordinates $(1,v)$. Define $\phi_v+\phi_w$ to be the composition of the elations $\phi_{v}$ and $\phi_w$ and define $\lambda \phi_v$ to be $\phi_{\lambda v}$ where $\lambda\in \F_q$. It is clear that $\phi_v+\phi_w=\phi_{v+w}$ and it is not hard to check that with this addition and scalar multiplication, the set $\{\phi_v\vert v \in \V(3n,q)\}$ is a $3n$-dimensional $\F_q$-vector space.

Let $s$ be an element of $G_{per}$ and let $t$ be a non-trivial element of $G_{el}$, then $s^{-1}ts$ is a non-trivial element of $G_{per}$. Now suppose $s^{-1}ts$ fixes a point $P$, then $t$ fixes the point $s(P)$, and since $t$ is non-trivial, $s(P)$ is a point of $H$, hence, $s=s(P)$ belongs to $H$, which implies that $s^{-1}ts$ is an element of $G_{el}$.

The elation $\phi_v$, where $v=(x_1,\ldots,x_{3n})$, corresponds to the matrix obtained by replacing the first column of the identity matrix by the vector $(1,x_1,\ldots,x_{3n})$. Similarly, a perspectivity with axis $H$, say $\psi$, corresponds to the matrix obtained by replacing the first column of the identity matrix by the vector $(\lambda, y_1,\ldots,y_{3n})$ for some $\lambda\neq 0$ and $y_1,\ldots,y_{3n}$ not all zero in $\F_q$. It follows that $\psi^{-1}\phi_{v}\psi=\phi_{\lambda v}$. Hence, conjugating with an element of $G_{per}$ corresponds to scalar multiplication with an element of $\F_q$. This also implies that a subgroup is normalised under $G_{per}$ if and only if it is closed under scalar multiplication with elements of $\F_q$, so if and only if it forms an $\F_q$-vector subspace.
\end{proof}

\begin{lemma} The stabiliser $\bar{S}'$ of lines of a Miquelian Laguerre plane of order $q^n$ has order $q^{3n}(q^n-1)$.
\end{lemma}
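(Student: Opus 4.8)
The plan is to identify $\M$ with its classical model and reduce the question to a short linear-algebra computation. By van der Waerden--Smid (quoted above), the Miquelian Laguerre plane $\M$ of order $q^n$ is the classical one, i.e.\ the Laguerre plane arising from a quadratic cone $\mathcal{Q}$ of $\PG(3,q^n)$ with vertex $V$, whose lines are the generators of $\mathcal{Q}$. I would then invoke the standard fact that $\mathrm{Aut}(\M)$ is exactly the stabiliser of $\mathcal{Q}$ in $\PGammaL(4,q^n)$. Thus $\bar{S}'$ is the group of collineations of $\PG(3,q^n)$ that fix $\mathcal{Q}$ and fix every generator of $\mathcal{Q}$.

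The key step is to show that $\bar{S}'$ is precisely the group of perspectivities of $\PG(3,q^n)$ with centre $V$. If $\phi\in\bar{S}'$, then $\phi$ fixes $V$, the common point of all generators, so it induces a collineation $\bar\phi$ of the quotient $\PG(3,q^n)/V\cong\PG(2,q^n)$; since $\phi$ fixes each of the $q^n+1$ generators, $\bar\phi$ fixes pointwise the projection of the base conic of $\mathcal{Q}$, an arc of $q^n+1\ge 4$ points. By the fundamental theorem of projective geometry $\bar\phi$ is the identity: a companion field automorphism $\sigma$ would have to fix every conic point $(1:t:t^2)$, forcing $t^\sigma=t$ for all $t\in\F_{q^n}$ and hence $\sigma=\mathrm{id}$. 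Therefore $\phi$ fixes every line through $V$, i.e.\ $\phi$ is a perspectivity with centre $V$. Conversely, any perspectivity $\phi$ with centre $V$ induces the identity on $\PG(3,q^n)/V$, hence fixes every line through $V$ (in particular every generator) and, $\mathcal{Q}$ being a union of lines through $V$, maps $\mathcal{Q}$ onto itself; so $\phi\in\bar{S}'$.

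It then remains to count. Choosing coordinates on $\PG(3,q^n)$ with $V=(0:0:0:1)$, the perspectivities with centre $V$ are represented, uniquely after normalising, by the matrices $\left(\begin{smallmatrix} I_3 & 0\\ u^{\top} & \gamma\end{smallmatrix}\right)$ with $u\in\F_{q^n}^{3}$ and $\gamma\in\F_{q^n}^{\ast}$ (the elations being the case $\gamma=1$, the homologies the case $\gamma\neq 1$); distinct pairs $(u,\gamma)$ give distinct collineations. Hence $|\bar{S}'|=q^{3n}(q^n-1)$. (One checks that in the embedding of $\M$ in $\PG(3n,q)$ this group contains $G_{el}$, indeed $G_{per}$, of Lemma~\ref{1lemma}, with $[\bar{S}':G_{el}]=q^n-1$, which is reassuring.)

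The one delicate point is the input to the second step: that every automorphism of the abstract incidence structure $\M$ is induced by a collineation of $\PG(3,q^n)$ stabilising $\mathcal{Q}$. This is precisely what excludes "exotic" automorphisms fixing all lines that are not central collineations, and it is the classification-type ingredient on which the whole count rests; the remainder is routine.
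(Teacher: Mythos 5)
Your proof is correct and follows essentially the same route as the paper: reduce to the classical cone model, use the fact that all automorphisms are induced by collineations of $\PG(3,q^n)$ stabilising the cone, observe that fixing all generators forces a perspectivity with centre $V$, and count. The only differences are cosmetic — you count the perspectivities by an explicit matrix normal form where the paper invokes duality with axial collineations, and you spell out (more carefully than the paper does) why the induced collineation of $\PG(3,q^n)/V$ is the identity, including the elimination of the field automorphism.
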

\begin{proof} All automorphisms of the Miquelian Laguerre plane are induced by automorphisms of a quadratic cone with vertex $V$ in $\PG(3,q^n)$ (see \cite[Theorem 5.15]{Delandtsheer}). Hence, all elements in the stabiliser $\bar{S'}$ of the lines of $\M$ correspond to elements of $\PGammaL(4,q^n)$, fixing all lines of the quadratic cone, hence, fixing $V$ and all lines through $V$. The number of elements fixing all lines through a given point equals the number of collineations with a fixed axis in $\PG(3,q^n)$, which equals $q^{3n}(q^n-1)$. Moreover, it is clearly not possible for two different elements of the line-wise stabiliser of the point $V$ to induce the same automorphism of the quadratic cone.
\end{proof}

\begin{lemma} The stabiliser $\bar{S}$ of lines of $\G$ is a subgroup of $\bar{S}'$, the stabiliser of lines of a Miquelian Laguerre plane of order $q^n$.
\end{lemma}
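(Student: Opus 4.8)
The plan is to show that every element of $\bar S$ --- i.e.\ every automorphism of $\G$ fixing each of its $s$ lines --- extends in a unique way to an automorphism of $\M$ fixing each of the $q^n+1$ lines of $\M$; the assignment $g\mapsto\hat g$ is then the required embedding of $\bar S$ into $\bar S'$ (and under the natural realisation of both groups as permutation groups of the common circle set $\PG(3n,q)\setminus H$ it is a genuine subgroup containment).

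First I would record the relationship between $\G$ and $\M$ produced by Theorem~\ref{main1}: since $s>m_2'(q^n)$ and $\S_i$ extends to a Desarguesian spread, $\G$ sits inside $\M$ as a sub-orthogonal-array --- the $s$ lines of $\G$ are among the $q^n+1$ lines of $\M$, the two structures have the same $q^{3n}$ circles, and a circle of $\M$ meets each line of $\G$ in exactly the point prescribed by $\G$. Because a circle of $\M$ meets all $s\ge 3$ lines of $\G$ in pairwise non-collinear points (two points of a circle on a common line would contradict (AX2)), it is determined by this restriction via (AX3); so restriction to the lines of $\G$ is a bijection from the circles of $\M$ onto the circles of $\G$. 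I would also note that distinct points of a single line of $\M$ lie on disjoint sets of circles (again by (AX2)), so a point of $\M$ is recovered from its line together with the set of circles through it.

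Next, given $g\in\bar S$, let $\alpha$ be the permutation of the circles of $\M$ defined through the above bijection by $\alpha(C)|_\G=g(C|_\G)$, and consider the structure $\M^{g}$ obtained from $\M$ by renaming each ``old'' point $x$ (a point on a line of $\G$) as $g(x)$ and each circle $C$ as $\alpha(C)$, leaving the lines and the ``new'' points untouched, and keeping incidence. Then $\M^{g}\cong\M$ via this renaming, so $\M^{g}$ is again a Miquelian Laguerre plane of order $q^n$; and since $g$ fixes each line of $\G$ and is an automorphism of $\G$, while $\alpha$ was chosen to intertwine its circle action, $\M^{g}$ restricts to $\G$ exactly on the lines of $\G$. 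By the uniqueness clause of Theorem~\ref{main1}, $\M^{g}=\M$. Reading off the incidences of the new points in $\M^{g}=\M$ gives that $\alpha$ permutes, among themselves, the set of circles through each point $P'$ lying on a new line. Hence $\hat g$, defined as $g$ on old points, the identity on new points and $\alpha$ on circles, is an incidence-preserving bijection fixing every line of $\M$, so $\hat g\in\bar S'$ and $\hat g|_{\G}=g$.

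Finally, uniqueness of the extension follows because a line-fixing automorphism of $\M$ that restricts to the identity on $\G$ must fix every circle (through the restriction bijection) and therefore every point; consequently $g\mapsto\hat g$ is a well-defined injective group homomorphism and $\bar S$ is (isomorphic to) a subgroup of $\bar S'$. The main obstacle is the construction of $\hat g$ on the new points: this cannot be done by incidence-counting, and the device ``twist $\M$ by $g$, observe the result is still an extension of $\G$, and conclude it equals $\M$ by uniqueness'' is precisely what carries $g$ across the added lines. A subsidiary point requiring care is the verification that $\M^{g}$ really restricts to $\G$ on the lines of $\G$ --- this is where it is essential that $g$ fixes each line of $\G$ and is an automorphism, not merely a bijection of points.
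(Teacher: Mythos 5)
Your proposal follows the paper's proof in strategy: the paper's entire argument is that the uniqueness of the extension of $\G$ to $\M$ given by Theorem \ref{main1} forces $Aut(\G)$ to embed in $Aut(\M)$, whence $\bar{S}\leq\bar{S}'$, and your device of twisting $\M$ by $g$ and invoking uniqueness is exactly the mechanism that the paper leaves implicit. The construction of $\alpha$ via the restriction bijection on circles, and the verification that $\M^{g}$ again extends $\G$, are both correct.

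One step is overstated. You claim that $\hat{g}$ may be taken to be the identity on the new points, i.e.\ that $\alpha$ maps the pencil of circles through each new point to itself. The extension adjoins \emph{abstract} new points, so its uniqueness only determines the \emph{set} of new pencils (the partitions of the circle set attached to the adjoined lines); what follows from $\M^{g}=\M$ is therefore that $\alpha$ permutes these pencils among themselves, so that $\hat{g}$ acts on the new points by some induced permutation $\sigma$, a priori nontrivial and a priori even interchanging new lines. Since membership in $\bar{S}'$ requires $\hat{g}$ to fix each of the $q^{n}+1-s$ new lines individually, an extra argument is needed here, for instance: $Aut(\M)$ induces on the $q^{n}+1$ lines (the points of a conic) a subgroup of $\PGammaL(2,q^{n})$, and a non-identity element of that group fixes at most $\sqrt{q^{n}}+1$ points of the conic, while $\hat{g}$ already fixes $s>m_2'(q^n)\geq (q^{n}+7)/2$ of them (this lower bound on $m_2'(q^n)$ is the one used in the proof of Lemma \ref{essential}). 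The paper's own proof is equally silent on this point, so your argument is not less complete than the original; I flag it because your formulation makes the unsupported step explicit.
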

\begin{proof}By Theorem \ref{main1}, the orthogonal array $\G$ of strength $3$ on $q^n$ symbols, $s$ constraints and index $1$, can be extended in a unique way to $\M$, where $\M$ is a Miquelian Laguerre plane. This implies that the automorphism group $Aut(\G)$ of $\G$ is a subgroup of the automorphism group $Aut(\mathcal{M})$ of $\mathcal{M}$.  This also implies that  $\bar{S}$, the stabiliser of the lines of $\G$ in $Aut(\G)$, is a subgroup of $\bar{S'}$, the stabiliser of lines of $\M$. 
\end{proof}

\begin{lemma} \label{3lemma} The stabiliser $\bar{S}$ of lines of $\G$ acts transitively on the circles of $\G$ and only the identity element fixes more than one circle (i.e. $\bar{S}$ is a Frobenius group). The number of fixed-point-free elements in $\bar{S}$ equals $q^{3n}-1$. The same holds for $\bar{S}'$.
\end{lemma}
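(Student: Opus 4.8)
The plan is to determine $\bar{S}'$ explicitly, read off the three assertions for $\bar{S}'$ from a one-line computation, and then transfer them to $\bar{S}$ via the inclusions $G_{el}\le\bar{S}\le\bar{S}'$ that were set up above. By the previous lemmas, $\bar{S}'$ is the group of collineations of $\PG(3,q^n)$ that fix the vertex $V$ of the quadratic cone together with every generator; equivalently, these are the central collineations with centre $V$, and there are $q^{3n}(q^n-1)$ of them. Choosing coordinates with $V=(0:0:0:1)$, such a collineation induces the identity on the quotient plane $\PG(3,q^n)/V$, so after rescaling it has the form $M_{\gamma,\delta}\colon (x_0:x_1:x_2:x_3)\mapsto (x_0:x_1:x_2:\gamma_0x_0+\gamma_1x_1+\gamma_2x_2+\delta x_3)$ with $\gamma\in\F_{q^n}^3$ and $\delta\in\F_{q^n}^\ast$ (no field automorphism survives, since it would have to fix the base plane pointwise). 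The circles of $\M$ are the plane sections of the cone avoiding $V$, that is, the planes $\pi_u\colon X_3=u_0X_0+u_1X_1+u_2X_2$ for $u\in\F_{q^n}^3$, and a direct substitution yields $M_{\gamma,\delta}(\pi_u)=\pi_{\gamma+\delta u}$.

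From this formula everything follows. The subgroup $\{M_{\gamma,1}\}$ acts on the circles by $\pi_u\mapsto\pi_{u+\gamma}$, hence sharply transitively, so $\bar{S}'$ is transitive; and $\bar{S}$ is transitive as well, since it contains $G_{el}$, which fixes $H$, hence every element of $\K$ and every line of $\G$, while acting regularly on the affine points of $\PG(3n,q)$. Next, $M_{\gamma,\delta}$ fixes $\pi_u$ if and only if $(1-\delta)u=\gamma$: for $\delta\ne1$ there is a unique such $u$, and for $\delta=1$ it forces $\gamma=0$, i.e.\ $M_{\gamma,\delta}=\mathrm{id}$. So no non-identity element of $\bar{S}'$ — and hence none of $\bar{S}$ — fixes more than one circle; combined with transitivity this shows that distinct circle-stabilisers meet trivially, and since the stabilisers are non-trivial (for $\bar{S}'$ because $q^{3n}(q^n-1)>q^{3n}$; for $\bar{S}$ because it also contains the perspectivity group $G_{per}$ with axis $H$, whose stabiliser of an affine point has order $q-1>1$ as $q$ is odd), both $\bar{S}$ and $\bar{S}'$ are Frobenius groups on the circles. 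Finally, the fixed-point-free elements of $\bar{S}'$ are exactly the $M_{\gamma,1}$ with $\gamma\ne0$, namely $q^{3n}-1$ of them; since $G_{el}\setminus\{\mathrm{id}\}$ already consists of $q^{3n}-1$ fixed-point-free elements and $G_{el}\le\bar{S}\le\bar{S}'$, the fixed-point-free elements of $\bar{S}$ coincide with those of $\bar{S}'$, so the count $q^{3n}-1$ holds for both.

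The computations above are entirely routine, so there is no real obstacle beyond bookkeeping: one must check that the circles of $\G$ correspond bijectively to the circles of $\M$ under the unique extension of Theorem~\ref{main1}, so that ``acting on circles'' means the same thing for $\bar{S}$ and for $\bar{S}'$, and that $G_{el}$ and $G_{per}$ genuinely sit inside $\bar{S}$ — both immediate, since elations and perspectivities with axis $H$ fix $H$ pointwise, hence fix each element of $\K$ and each line of $\G$, while permuting among themselves the $2n$-spaces through elements of $\K$ and the affine points of $\PG(3n,q)$.
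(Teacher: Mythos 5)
Your proof is correct, but it takes a genuinely different route from the paper. The paper stays inside $\G$: transitivity comes from $G_{el}$ (as in your argument), but the statement that only the identity fixes two circles is proved intrinsically --- an element fixing two circles induces an automorphism of the derived structure at a point, which completes to a projective plane of order $q^n$ and fixes $s-1$ concurrent lines linewise plus further points, forcing it to be trivial --- and the count $q^{3n}-1$ of fixed-point-free elements is then obtained by double-counting pairs $(C,g)$ with $g$ fixing $C$ together with orbit--stabiliser, an argument that applies verbatim to $\bar S$ and $\bar S'$ without knowing either group's order or structure. You instead coordinatise $\bar S'$ on the cone model as the maps $M_{\gamma,\delta}$, read off the affine action $u\mapsto\gamma+\delta u$ on circles (giving transitivity, the at-most-one-fixed-circle property, and the exact set of fixed-point-free elements in one stroke), and then transfer to $\bar S$ via $G_{el}\le\bar S\le\bar S'$, sandwiching the fixed-point-free count between the $q^{3n}-1$ induced elations and the $q^{3n}-1$ such elements of $\bar S'$. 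Your approach is more explicit and yields as a by-product that the fixed-point-free elements of $\bar S$ are exactly the induced elations (essentially Lemma \ref{2lemma}); the paper's approach is more self-contained, since it does not depend on the classification of the line-stabiliser of the Miquelian plane nor on matching the circles of $\G$ with those of $\M$. Two small points to tighten: your parenthetical reason for excluding field automorphisms (``it would have to fix the base plane pointwise'') is not quite the right justification --- what one should say is that an element of $\bar S'$ fixes every generator, hence induces on the quotient plane $\PG(3,q^n)/V$ a collineation fixing a conic pointwise, which must be the identity (a nontrivial collineation fixes at most a subplane, which cannot contain $q^n+1$ points of an arc), so the companion automorphism is trivial; this, together with the order $q^{3n}(q^n-1)$ from the preceding lemma, justifies $\bar S'=\{M_{\gamma,\delta}\}$. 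And the bijection between circles of $\G$ and of $\M$, which you flag as immediate, is indeed routine (each circle of $\M$ meets the point set of $\G$ in exactly $s\ge 3$ points, which form a circle of $\G$, and distinct circles of $\M$ share at most two points), but it is worth recording since your whole transfer of the fixed-point-free count rests on it.
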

\begin{proof} The group $G_{el}$ of elations with axis $H$ in $\PG(3n,q)$ clearly induces a subgroup of $\bar{S}$. As $G_{el}$ acts transitively on the points of $\PG(3n,q)$, not in $H$, the group $\bar{S}$ acts transitively on the circles of $\G$. An element $g$ which fixes two circles $C_1$ and $C_2$ of $\G$ induces an automorphism $\alpha$ of the derived structure at $P$, where $P$ is a point of $C_1$ or $C_2$, which completes uniquely to a projective plane $\pi$ of order $q^n$. 

Since $\alpha$ fixes $s-1$ lines through a point linewise, all intersection points of a fixed line with these $s-1$ lines are fixed pointwise and $\alpha$ fixes at least $s-3$ additional points, it easily follows that $\alpha$ induces the trivial automorphism $\beta$ of $\pi$ for every choice of $P$, so $g$ is trivial. 

Let $f$ be the number of elements of $\bar{S}$ fixing at least one circle. Count the number of couples $(C,g)$ where $C$ is a circle and $g$ is an element of $\bar{S}$ fixing $C$. Then, as $\bar{S}$ acts transitively on the circles and only the identity fixes more than 1 circle we get that this number equals $q^{3n}|\bar{S}_C|=f-1+q^{3n}$. By the orbit-stabiliser-theorem, $q^{3n}|\bar{S}_C|=|\bar{S}|$, which implies that $|\bar{S}|-f=q^{3n}-1$. The same argument works for $\bar{S}'$.
\end{proof}

\begin{lemma} \label{2lemma} Let $T$ be the set of fixed-point-free elements of $\bar{S}$ (the stabiliser of lines of $\G$), together with the identity. Then $T$ is a normal subgroup of $\bar{S}$ and $T$ can be identified with $G_{el}$. The same holds for the set of fixed-point-free elements $T'$ of $\bar{S}'$, together with the identity.
\end{lemma}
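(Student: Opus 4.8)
The plan is to show that $T$, the set of fixed-point-free elements of $\bar{S}$ together with the identity, is precisely the image of $G_{el}$ in $\bar{S}$, and that this set is a normal subgroup. First I would observe that $G_{el}$ acts semiregularly on the circles of $\G$ (indeed regularly, since $|G_{el}|=q^{3n}$ equals the number of circles), so every nontrivial element of $G_{el}$ moves every circle; combined with the fact that a line of $\G$ is a set of $q^n$ collinear points each lying on $q^{2n}$ circles, a nontrivial elation fixing no circle can fix no point of $\P$ either, because a fixed point would force a fixed circle through it (an elation with axis $H$ that fixes an affine point fixes every point of the line joining it to its image... in fact fixes all circles is false, but it does fix the circle through that point lying on the axis of the elation — more carefully, one argues via Lemma~\ref{3lemma} that the only elements fixing a point are those fixing a circle). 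Hence $G_{el}\setminus\{1\}$ consists of fixed-point-free elements, so $G_{el}\subseteq T$.

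Next I would compare cardinalities. By Lemma~\ref{3lemma}, the number of fixed-point-free elements of $\bar{S}$ is exactly $q^{3n}-1$, so $|T|=q^{3n}=|G_{el}|$. Therefore the inclusion $G_{el}\subseteq T$ forces $T=G_{el}$, which immediately gives the identification with $G_{el}$ and, via Lemma~\ref{1lemma}, identifies $T$ with the additive group of the $3n$-dimensional $\F_q$-vector space $V$. In particular $T$ is a subgroup of $\bar{S}$.

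For normality, I would use the standard Frobenius-kernel argument: in a Frobenius group, the set consisting of the identity together with all fixed-point-free elements is a (normal) subgroup, the Frobenius kernel. Since Lemma~\ref{3lemma} establishes that $\bar{S}$ is a Frobenius group with complement $\bar{S}_C$ (a point/circle stabiliser), Frobenius's theorem applies and $T$ is a normal subgroup of $\bar{S}$. Alternatively, and more in the spirit of the surrounding lemmas, one can argue directly: conjugation permutes fixed-point-free elements among themselves and fixes the identity, so $T$ is conjugation-invariant; the only work is checking $T$ is closed under multiplication, which follows from $T=G_{el}$ already proved. The same reasoning applies verbatim to $\bar{S}'$, using the last sentence of Lemma~\ref{3lemma}. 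The main obstacle I anticipate is the careful verification that a nontrivial elation is genuinely fixed-point-free as an automorphism of $\G$ — i.e.\ that no $2n$-space through an element of $\K$ is stabilised by a nontrivial elation of $\PG(3n,q)$ with axis $H$ — which needs the fact that such a $2n$-space meets $H$ in an element of $\K\subseteq H$, so an elation fixing it would have to fix an affine point of it, contradicting that elations with axis $H$ fix no affine point; once this is pinned down, the cardinality count closes the argument.
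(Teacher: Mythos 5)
Your overall strategy is exactly the paper's: show that every nontrivial element of $G_{el}$ induces a fixed-point-free element of $\bar{S}$, so the induced copy of $G_{el}$ sits inside $T$; then use the count $|T|=(q^{3n}-1)+1=q^{3n}=|G_{el}|$ from Lemma~\ref{3lemma} to conclude that $T$ equals that copy, hence is a subgroup identified with $G_{el}$ (and, via Lemma~\ref{1lemma}, with a $3n$-dimensional $\F_q$-vector space); and obtain normality from the fact that conjugation preserves the property of fixing no circle. The same count and conjugation argument for $\bar{S}'$ is also how the paper finishes.

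However, your discussion of what ``fixed-point-free'' means goes astray, and the verification you single out as ``the main obstacle'' is both unnecessary and false as stated. In Lemma~\ref{3lemma} the Frobenius action of $\bar{S}$ is on the \emph{circles} of $\G$, i.e.\ on the affine points of $\PG(3n,q)$; fixed-point-free means fixing no circle. So all that is needed is that a nontrivial elation with axis $H$ fixes no point of $\PG(3n,q)\setminus H$, which is immediate. By contrast, your claim that no $2n$-space through an element of $\K$ is stabilised by a nontrivial elation with axis $H$ is wrong: if the centre $c$ of the elation lies in such a $2n$-space $\pi$, then for every $x\in\pi\setminus H$ the line $\langle x,c\rangle$ lies in $\pi$, so $\pi$ is stabilised setwise. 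Fortunately nothing in the lemma requires elations to move points of $\P$; once that paragraph is replaced by the one-line observation about affine points, your proof coincides with the paper's. (The paper's proof additionally records, via a Sylow argument, that $T$ is the \emph{unique} subgroup of order $q^{3n}$ of $\bar{S}$ --- not part of the statement here, but it is what Lemma~\ref{5lemma} later relies on, so it is worth keeping in mind.)
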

\begin{proof} Every element of $G_{el}$ induces an element of $T$ and $\vert G_{el}\vert=\vert T\vert$ by Lemmas \ref{1lemma} and \ref{3lemma}, so $T$ is a subgroup of $\bar{S}$. Let $s$ be an element of $\bar{S}$ and $t$ be an element of $T$, then it is clear that $s^{-1}ts$ cannot fix a circle, unless $t$ is the identity. This implies that $T$ is normal in $\bar{S}$.

Now $T$ is the unique subgroup of order $q^{3n}$ in $\bar{S}$ since every subgroup of order $q^{3n}$ of $\bar{S}$ is a Sylow $p$-subgroup, where $q$ is a power of the prime $p$, Sylow $p$-subgroups are conjugate, and $T$ is normal in $\bar{S}$. This implies that $T\cong G_{el}$.
 
 In the same way, $G_{el}$ induces a subgroup $T'$ of order $q^{3n}$ in $\bar{S'}$ which is unique in $\bar{S'}$ and hence, is isomorphic to $G_{el}$.
\end{proof}

\begin{lemma} \label{5lemma} The group $\bar{S}'$ has a unique subgroup $S$ of order $q^{3n}(q-1)$. This subgroup is contained in $\bar{S}$.
\end{lemma}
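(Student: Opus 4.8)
\textit{Proof proposal.} The plan is to run a Sylow argument modulo the normal subgroup $T'$ furnished by Lemma~\ref{2lemma}. Write $q=p^h$ with $p$ prime. By Lemma~\ref{2lemma}, $T'\cong G_{el}$ is normal in $\bar S'$ and $|T'|=q^{3n}=p^{3nh}$; since $|\bar S'|=q^{3n}(q^n-1)$ and $p\nmid q^n-1$, the subgroup $T'$ is the unique Sylow $p$-subgroup of $\bar S'$, so every $p$-subgroup of $\bar S'$ lies inside $T'$. Now let $S\leq\bar S'$ be any subgroup with $|S|=q^{3n}(q-1)=p^{3nh}(q-1)$. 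A Sylow $p$-subgroup of $S$ has order $q^{3n}$, is a $p$-subgroup of $\bar S'$, hence is contained in $T'$, and by order equals $T'$. Therefore $T'\leq S$, and $S/T'$ is a subgroup of order $q-1$ of the quotient $\bar S'/T'$.

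Next I would identify $\bar S'/T'$. Under the identification (from the lemma computing $|\bar S'|$) of $\bar S'$ with the group of collineations of $\PG(3,q^n)$ fixing a plane $\Pi$ pointwise, $\bar S'$ contains the group $C$ of homologies with axis $\Pi$ and a fixed centre off $\Pi$: this $C$ is cyclic of order $q^n-1$, meets $T'$ trivially (no non-identity collineation is simultaneously an elation and a homology), and $|C|\,|T'|=(q^n-1)q^{3n}=|\bar S'|$. Hence $\bar S'=T'\rtimes C$ and $\bar S'/T'\cong C$ is cyclic of order $q^n-1$ (equivalently, in suitable coordinates such a collineation has matrix obtained from the identity by scaling the first diagonal entry by some $a\in\F_{q^n}^{*}$ and replacing the first column below the diagonal by an arbitrary vector, so $\bar S'\cong\F_{q^n}^{3}\rtimes\F_{q^n}^{*}$). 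Since $q-1$ divides $q^n-1$, the cyclic group $\bar S'/T'$ has exactly one subgroup of order $q-1$; pulling it back through $\bar S'\to\bar S'/T'$ shows that there is precisely one subgroup $S\leq\bar S'$ of order $q^{3n}(q-1)$, which settles the uniqueness part.

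Finally, for the containment I would invoke $G_{per}$. By Lemma~\ref{1lemma}, $|G_{per}|=q^{3n}(q-1)$. Every element of $G_{per}$ fixes $H$ pointwise, hence fixes each element of $\K$ and therefore each line of $\G$; being a collineation of $\PG(3n,q)$ that stabilises $H$, it permutes the $2n$-spaces not contained in $H$ and the points not in $H$ and preserves incidence, so it is an automorphism of $\G$ fixing every line, i.e.\ an element of $\bar S$ (the same observation made for $G_{el}$ in the proof of Lemma~\ref{3lemma}). Thus $G_{per}\leq\bar S\leq\bar S'$, so $G_{per}$ is a subgroup of $\bar S'$ of order $q^{3n}(q-1)$, and by the uniqueness just proved $S=G_{per}\leq\bar S$. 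I do not expect a genuine obstacle here: the argument is a Sylow/quotient count, and the only points needing a line of justification are the cyclicity of $\bar S'/T'$ (via the homology complement $C$, or via the normal form of a perspectivity) and the routine fact that perspectivities with axis $H$ act on $\G$ as line-fixing automorphisms, which duplicates the check already made for $G_{el}$.
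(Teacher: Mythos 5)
Your proof is correct, and it reaches the conclusion by a recognisably similar skeleton (reduce to the quotient $\bar S'/T'$ being cyclic of order $q^n-1$, hence having a unique subgroup of order $q-1$), but the two halves are executed differently from the paper. For uniqueness, the paper works inside $\bar S$ first: it identifies $\bar S/T$ with the stabiliser $\bar S_C$ of a circle, lets $\bar S_C$ act on the derived structure at a point of $C$ (which completes to a Desarguesian plane), shows the induced maps are homologies with fixed centre and axis, and deduces cyclicity from the cyclicity of that homology group; it then says the same reasoning applies to $\bar S'$. You instead read off the cyclicity of $\bar S'/T'$ directly from the concrete model of $\bar S'$ as a perspectivity group of $\PG(3,q^n)$ established in the lemma computing $|\bar S'|$ (the semidirect decomposition $T'\rtimes C$ with $C\cong\F_{q^n}^*$), which avoids any further appeal to the derived plane. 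For the containment $S\leq\bar S$, you exhibit $G_{per}$ explicitly as a line-fixing automorphism group of $\G$ of order $q^{3n}(q-1)$ and invoke the uniqueness in $\bar S'$; this is a genuine improvement in transparency, since the paper's argument establishes that $\bar S$ has \emph{at most} one subgroup of that order (as a subgroup of a cyclic group of order $q^n-1$, the circle stabiliser $\bar S_C$ has at most one subgroup of order $q-1$) but leaves the existence of such a subgroup of $\bar S$ implicit, whereas your use of $G_{per}$ supplies it outright. The one small imprecision is your description of $\bar S'$ as collineations fixing a plane pointwise, where the paper's model is the dual one (collineations fixing the vertex $V$ linewise, i.e.\ perspectivities with centre $V$); the two groups are abstractly isomorphic and the normal elation subgroup is the unique Sylow $p$-subgroup in either model, so nothing in your argument is affected.
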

\begin{proof}
The group $\bar{S}$ is a subgroup of $\bar{S}'$, which has order $q^{3n}(q^n-1)$. Since $T$ is the unique subgroup of order $q^{3n}$ in $\bar{S}$ (by Lemma \ref{2lemma}), every subgroup of $\bar{S}$ of order $q^{3n}(q-1)$ contains $T$. The subgroups of $\bar{S}$ containing $T$ are in one-to-one correspondence with the subgroups of $\bar{S}/T$. We will show that $\bar{S}/T$ has a unique subgroup of order $q-1$. 

First, note that $\bar{S}/T\cong \bar{S}_C$, where $C$ is a circle: by the orbit-stabiliser theorem and the fact that $\bar{S}$ acts transitively on the circles, we have that $\vert \bar{S}_C\vert=\vert \bar{S}\vert/\vert T\vert$. Moreover, the mapping from $\bar{S_C}$ to $\bar{S}/T$ mapping $s$ to $sT$ is injective as $s$ fixes the points of $C$ and $T$ consists of fixed-point-free elements, so $\bar{S_C}$ is a subgroup of $\bar{S}/T$.

Let $P$ be a point in $C$ and consider the action of $S_C$ on the derived structure at $P$, which completes uniquely to a projective plane $\pi$, which is Desarguesian by the fact that $\G$ extends to a Miquelian Laguerre plane.

The group $S_C$ induces an automorphism group of $\pi$ fixing $s-1$ points on the line $L$ corresponding to $C$, and $s-1$ lines through the point $X$, which is the point at infinity of the lines of $\G$ linewise. It is not hard to show that such an automorphism has to be a homology with center $X$ and axis $L$. This group of homologies has order $q^n-1$ and is cyclic, hence, contains a unique subgroup of order $(q-1)$.

The same reasoning shows that $S$ is the unique subgroup of $\bar{S'}$ of order $q^{3n}(q-1)$. 
\end{proof}

\begin{theorem} \label{main2} Let $\K=\{K_1,\ldots,K_{s}\}$ be a dual pseudo-arc in $\PG(3n-1,q)$ with $s$ elements, where $s>m'_2(q^n)$. Let $\S_i:=\{K_j\cap K_i\vert j\neq i\}$. If $\S_i$ extends to a Desarguesian spread for some $1\leq i\leq s$, then $\K$ can be extended to a dual pseudo-conic.
\end{theorem}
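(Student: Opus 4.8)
The plan is to recover each member of $\K$, as a $(2n-1)$-space of $\PG(3n-1,q)$, from the (unique) Miquelian extension of $\G$ together with the canonical groups furnished by Lemmas~\ref{1lemma}--\ref{5lemma}, and then to show that the same recovery procedure, applied to \emph{all} lines of that extension, produces a dual pseudo-conic containing $\K$. By Theorem~\ref{main1}, $\G$ extends uniquely to a Miquelian Laguerre plane $\M$ of order $q^n$; by the theorem of van der Waerden and Smid, $\M$ is classical, and hence $\M\cong L(\O)$ for some dual pseudo-conic $\O$. Recall that $\bar S\le\bar S'$; write $T=T'\cong G_{el}$ for their common Frobenius kernel (Lemmas~\ref{2lemma}, \ref{3lemma}) and $S\cong G_{per}$ for the unique subgroup of $\bar S'$ of order $q^{3n}(q-1)$ (Lemma~\ref{5lemma}). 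By Lemma~\ref{1lemma} we regard $V:=G_{el}$ as a $3n$-dimensional $\F_q$-vector space with $\PG(V)=H:=\PG(3n-1,q)$ (via the centre map on elations), on which conjugation by $S$ acts as $\F_q$-scalar multiplication.

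Now fix the recovery map: for a line $L$ of $\M$ and a point $p$ of $\M$ incident with $L$, set $W_L:=\mathrm{Stab}_T(p)$. Since $T$ acts transitively on the $q^n$ points of $L$ with kernel independent of $p$, the set $W_L$ is a subgroup of $V$ of order $q^{2n}$; since $S$ fixes $L$ and normalises $T$, the subgroup $W_L$ is normalised by $S$, so by Lemma~\ref{1lemma} it is a $2n$-dimensional $\F_q$-subspace of $V$, and $\PG(W_L)$ is a $(2n-1)$-space of $H$. Put $\O^\ast:=\{\PG(W_L)\colon L\text{ a line of }\M\}\subseteq H$. For $1\le i\le s$, the space $K_i$ is one of the lines of $\M$ (the extension retains the lines of $\G$ and the points on them), and, $T$ being the image of the genuine elation group, $W_{K_i}$ is the stabiliser in $G_{el}$ of a $2n$-space $\Pi$ with $\Pi\cap H=K_i$, i.e.\ the direction space of $\Pi$; hence $\PG(W_{K_i})=K_i$, so that $\K\subseteq\O^\ast$.

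It remains to see that $\O^\ast$ is a dual pseudo-conic. Fix an isomorphism $\theta\colon\M\to L(\O)$, with $\O$ lying in a hyperplane $\hat H$ of $\PG(3n,q)$. Because $T$ and $S$ are defined in purely group-theoretic terms (the Frobenius kernel of the line stabiliser, respectively its unique subgroup of order $q^{3n}(q-1)$), the isomorphism $\theta$ carries $T$ onto $G_{el}(\hat H)$ and $S$ onto $G_{per}(\hat H)$; the induced group isomorphism $V\to G_{el}(\hat H)$ intertwines the two conjugation actions, which by Lemma~\ref{1lemma} are $\F_q$-scalar multiplications on each side, and --- using that $q$ is odd to see that the companion map is a field automorphism --- it is therefore $\F_q$-semilinear, so it induces a collineation $\bar\theta\colon H\to\hat H$. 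Applying the recovery map inside $L(\O)$ returns, for every line $\hat L\in\O$, the direction space of the affine $2n$-flats through $\hat L$, i.e.\ $\hat L$ itself; hence $\bar\theta(\O^\ast)=\O$. Thus $\O^\ast$ is the image of the dual pseudo-conic $\O$ under a collineation, so it is a dual pseudo-conic in $H$, and $\K\subseteq\O^\ast$, as required.

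I expect the main obstacle to be the bookkeeping of the last paragraph: one must verify that an abstract isomorphism between these Laguerre (near-)planes is realised by a collineation of the ambient projective space --- equivalently, that the recovery map $L\mapsto\mathrm{Stab}_T(p)$ is genuinely the passage to direction $2n$-spaces and is compatible with such isomorphisms --- and this is exactly the point where the $S$-equivariance of Lemma~\ref{1lemma} is indispensable. Once the structural Lemmas~\ref{1lemma}--\ref{5lemma} are granted, none of the remaining steps requires a lengthy computation.
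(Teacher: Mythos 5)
Your proposal is correct and follows essentially the same route as the paper: recover each line of the unique Miquelian extension $\M$ as a point-stabiliser in the canonical Frobenius kernel $T$, use normalisation by the canonical subgroup $S$ together with Lemma~\ref{1lemma} to see these stabilisers are $2n$-dimensional $\F_q$-subspaces of $V\cong G_{el}$, and identify the resulting family of $(2n-1)$-spaces with a dual pseudo-conic containing $\K$. The only notable (and welcome) difference is that your final paragraph spells out, via transport of the group-theoretic recovery through an isomorphism $\M\cong L(\O)$, why the full family is a dual pseudo-\emph{conic} --- a point the paper's proof asserts more tersely after verifying the trivial-triple-intersection property directly.
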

\begin{proof} By the uniqueness of the group $T$ of size $q^{3n}$ and $S$ of size $q^{3n}(q-1)$ contained in $\bar{S}$ and $\bar{S}'$ obtained in Lemma \ref{2lemma} and Lemma \ref{5lemma}, we get that the action of $S$ on $T$ is the same for $\G$ and $\M$ and corresponds to the action of conjugating an elation of $\PG(3n,q)$ with axis $H$ with a homology of $\PG(3n,q)$ with axis $H$, i.e. with multiplication with an element of $\F_q$.

Let $\ell_1,\ldots,\ell_s$ be the lines of $\G$ and let $\ell_{s+1},\ldots,\ell_{q^n+1}$ be the lines of $\M$ that are not in $\G$. Let $P_i$ be a point on $\ell_i$. Consider the stabiliser $T_{P_i}$ of a point $P_i$ in the group $T$.

The action of $S$ on $T_{P_i}$ by conjugation normalises $T_{P_i}$: the point $P_i$ corresponds to a $2n$-space in $\PG(3n,q)$ through an element of $\K$. Let $s_i$ be a homology with centre $x_i$ and axis $H$, where $x_i$ lies in $P_i$. Then $s_i^{-1}T_{P_i}s_i=T_{P_i}$. Let $s_k$ be another element of $S$, then there is a (unique) element $t$ of $T$ that maps $s_i$ to $s_k$, so $s_k=ts_i$. It follows that $s_k^{-1}T_{P_i}s_k=s_i^{-1}t^{-1}T_{P_i}ts_i=s_iT_{P_i}s_i^{-1}=T_{P_i}$, because $t^{-1}T_{P_i}t=T_{P_i}$ as $T$ is abelian.  

By Lemma \ref{2lemma}, $T$ can be considered as a $3n$-dimensional vector space $V$ over $\F_q$. By Lemma \ref{1lemma}, we know that the fact that $T_{P_i}$ is normalised by $S$ implies that $T_{P_i}$ forms an $\F_q$-vector subspace of $V$. Since $| T_{P_i}|=q^{2n}$, these subspaces are $2n$-dimensional.

Since $T$ acts regularly on the circles of $\M$ and $3$ non-collinear points determine a unique circle, $T_{P_i}\cap T_{P_j}\cap T_{P_k}$, with $i,j,k$ mutually different, is the identity element. The space $\PG(V)$ can be identified with the hyperplane $H$, and it follows from the fact that $T_{P_i}\cap T_{P_j}\cap T_{P_k}$, with $i,j,k$ mutually different, is the identity element, that the $(2n-1)$-dimensional projective subspaces corresponding to $T_{P_i}$ form a dual pseudo-oval. Now $T_{P_i}$ corresponds to the element of $\K$ in $H$ corresponding to the line $\ell_i$. Hence, the set $T_{P_1},\ldots,T_{P_s}$ corresponds to the dual pseudo-arc $\K$ and $T_{P_1},\ldots,T_{P_{q^n+1}}$ corresponds to the dual pseudo-conic determining the Miquelian Laguerre plane $\M$. This implies that $\K$ can be extended to a dual pseudo-conic.
\end{proof}

\begin{corollary} \label{cor} Let $\K=\{K_1,\ldots,K_s\}$ be a pseudo-arc in $\PG(3n-1,q)$ of size $s> m'_2(q^n)$, $q$ odd. If for one element $K_i$ of $\K$, the partial spread $\S=\{K_1,\ldots,K_{i-1},K_{i+1},\ldots,K_{s}\}/K_i$ extends to a Desarguesian spread of $\PG(2n-1,q)=\PG(3n-1,q)/K_i$, then $\K$ is contained in a pseudo-conic.
\end{corollary}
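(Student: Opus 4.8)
The plan is to deduce Corollary~\ref{cor} from Theorem~\ref{main2} by dualising in $\PG(3n-1,q)$. Fix a duality $\delta$ of $\PG(3n-1,q)$. Since $\delta$ maps each $(n-1)$-space to a $(2n-1)$-space and interchanges the conditions ``three subspaces span the whole space'' and ``three subspaces have empty intersection'', the image $\K^\delta=\{K_1^\delta,\dots,K_s^\delta\}$ is a dual pseudo-arc in $\PG(3n-1,q)$ with $s$ elements, and $s>m_2'(q^n)$ is precisely the size hypothesis of Theorem~\ref{main2}.

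The next step is to check that the spread hypothesis transports. The duality $\delta$ restricts to a duality between the quotient geometry $\PG(3n-1,q)/K_i$ and the subspace $K_i^\delta\cong\PG(2n-1,q)$: subspaces through $K_i$ correspond, with inclusions reversed, to subspaces contained in $K_i^\delta$. Under this correspondence the spread element $K_j/K_i=\langle K_i,K_j\rangle/K_i$ (a $(2n-1)$-space containing $K_i$) goes to $\langle K_i,K_j\rangle^\delta=K_i^\delta\cap K_j^\delta$ (an $(n-1)$-space contained in $K_i^\delta$); hence $\S$ corresponds exactly to the partial spread $\{K_i^\delta\cap K_j^\delta\mid j\neq i\}$ of $K_i^\delta$, which is the set $\S_i$ attached to $\K^\delta$ in Theorem~\ref{main2}. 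Since the dual of a Desarguesian spread of a $(2n-1)$-dimensional projective space is again Desarguesian, ``$\S$ extends to a Desarguesian spread of $\PG(3n-1,q)/K_i$'' is equivalent to ``$\S_i$ extends to a Desarguesian spread of $K_i^\delta$''. Thus $\K^\delta$ satisfies the hypotheses of Theorem~\ref{main2}.

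Applying Theorem~\ref{main2} produces a dual pseudo-conic $\O$ with $\K^\delta\subseteq\O$, and applying $\delta^{-1}$ gives $\K\subseteq\O^{\delta^{-1}}$. It remains to observe that $\O^{\delta^{-1}}$ is a pseudo-conic: writing $\O=\delta_0(\O_0)$ for some duality $\delta_0$ and some pseudo-conic $\O_0$, we have $\O^{\delta^{-1}}=(\delta^{-1}\circ\delta_0)(\O_0)$ with $\delta^{-1}\circ\delta_0$ a collineation, and the property of being a pseudo-conic is preserved by collineations of $\PG(3n-1,q)$ (all Desarguesian spreads being $\PGammaL$-equivalent, and collineations sending conics to conics). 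This gives the conclusion, and this statement is exactly the Main Theorem of the paper.

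I do not expect any real obstacle here: the content is entirely in Theorem~\ref{main2}, and Corollary~\ref{cor} is its formal dual. The only points that require attention are the pieces of the translation dictionary used above --- that under $\delta$ the pair $\big(\PG(3n-1,q)/K_i,\,\S\big)$ becomes $\big(K_i^\delta,\,\{K_i^\delta\cap K_j^\delta\}\big)$, that ``Desarguesian spread'' and ``(dual) pseudo-conic'' are self-dual notions in the appropriate sense, and that $q$ odd is inherited from the hypothesis of the corollary --- all of which are standard, so the actual write-up should only be a few lines.
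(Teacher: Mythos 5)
Your proposal is correct and follows essentially the same route as the paper: dualise $\K$, identify the quotient partial spread $K_j/K_i=\langle K_i,K_j\rangle/K_i$ with $\bar{K_i}\cap\bar{K_j}$ in the dual space, and apply Theorem~\ref{main2}. Your write-up is a little more explicit than the paper's (e.g.\ in noting that Desarguesian spreads and pseudo-conics are preserved under the relevant dualities/collineations), but the argument is the same.
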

\begin{proof} Denote the dual of a subspace $M$ of $\PG(3n-1,q)$ by $\bar{M}$. An element of $\S$, say $K_1/K_i$ equals $\langle K_1,K_i\rangle/K_i$. This space can be identified with $\overline{\langle K_1,K_i\rangle}$, which equals $\bar{K_1}\cap \bar{K_i}$. This implies that the set $\{K_1,\ldots,K_{i-1},K_{i+1},\ldots,K_{s}\}/K_i$  extends to a Desarguesian spread of $\PG(2n-1,q)$ if and only if $\{\bar{K_1}\cap \bar{K_s},\ldots,\bar{K_{i-1}}\cap \bar{K_s}, \bar{K_{i+1}}\cap \bar{K_s}, \ldots, \bar{K_1}\cap \bar{K_s}\}$ extends to a Desarguesian spread. Hence, the statement follows by dualising Theorem \ref{main2}.
\end{proof}
\begin{remark} If we apply Corollary \ref{cor} to a pseudo-oval, then we obtain Theorem \ref{casse}.\end{remark}

\end{document}